\documentclass[11pt]{amsart}
\usepackage{amsfonts,amscd}
\usepackage[utf8]{inputenc}
\usepackage{hyperref}
\usepackage{amssymb}
\usepackage{url}
\usepackage{graphicx}
\usepackage[english]{babel}
\theoremstyle{plain}
\newtheorem{theorem}                {Theorem}      [section]
\newtheorem{proposition}  [theorem]  {Proposition}
\newtheorem{corollary}    [theorem]  {Corollary}

\theoremstyle{definition}

\newtheorem{remark}       [theorem]  {Remark}

\newcommand{\sech}{\operatorname{sech}}
\numberwithin{equation}{section}

\def \R{{\mathbb R}}
\def \s{{\mathbb S}}

\def \n{{\mathbb N}}
\def \h{{\mathbb H}}

\def \Hh{{\mathbb H}}
\newcommand{\Ric}{\operatorname{Ric}}

\newcommand{\Scal}{\operatorname{Scal}}

\usepackage{color}

\def \1 {\`}

\DeclareMathOperator{\grad}{grad}
\DeclareMathOperator{\Div}{div}

\numberwithin{equation}{section}

\title[Biconservative and Biharmonic Hypersurfaces]{Biconservative and Biharmonic Hypersurfaces in Product Spaces}
\author{V.~Branding}
\address{ Institute of Mathematics\\
University of Rostock\\
Ulmenstraße 69, 18057 Rostock, Germany}
\email{volker.branding@uni-rostock.de}

\author{S.~Montaldo}
\address{Universit\`a degli Studi di Cagliari\\
Dipartimento di Matematica e Informatica\\
Via Ospedale 72\\
09124 Cagliari, Italia}
\email{montaldo@unica.it}

\author{C.~Oniciuc}
\address{Faculty of Mathematics\\ ``Al.I. Cuza'' University of Iasi\\
Bd. Carol I no. 11 \\
700506 Iasi, ROMANIA}
\email{oniciucc@uaic.ro}

\author{A.~Ratto}
\address{Universit\`a degli Studi di Cagliari\\
Dipartimento di Matematica e Informatica\\
Via Ospedale 72\\
09124 Cagliari, Italia}
\email{rattoa@unica.it}

\usepackage{hyperref}
\begin{document}
\begin{abstract} In this paper we investigate biconservative and biharmonic hypersurfaces in product spaces $L^m(\varepsilon)\times 
\R$, where $L^m(\varepsilon)$ denotes the space form $\s^m$ or $\h^m$ of
constant sectional curvature $\varepsilon=1$ or $\varepsilon=-1$ respectively. We obtain their complete description in the case of isoparametric hypersurfaces and hypersurfaces of constant angle. Moreover, we prove some classification results for biconservative and biharmonic hypersurfaces for which $\grad H$ is a principal direction.  In this context  we obtain new examples of {\em complete}, {\em non-cylindrical}, {\em non-totally umbilical} biconservative hypersurfaces in $\h^m\times \R$. Then we prove a rigidity theorem related to these examples. Finally, we prove a global non-existence theorem for complete proper biharmonic hypersurfaces in $\h^m \times \R$ such that $H^2$ is bounded and the Ricci curvature is bounded from below.
\end{abstract}

\subjclass[2020]{Primary: 58E20; Secondary: 53C42, 53C43, 53B25}

\keywords{Biharmonic Hypersurfaces, Biconservative Hypersurfaces, Product Spaces}

\thanks{The authors S.M. and A.R. are members of the Italian National Group G.N.S.A.G.A. of INdAM and their work is partially supported by the Project {PRoBIKI} funded by Fondazione di Sardegna.}

\maketitle

\section{Introduction and statement of the results} 
 
In this paper we study biconservative and biharmonic hypersurfaces in product spaces $L^m(\varepsilon)\times 
\R$, where $L^m(\varepsilon)$ denotes the space form $\s^m$ or $\h^m$ of
constant sectional curvature $\varepsilon=1$ or $\varepsilon=-1$ respectively. A \textit{biharmonic hypersurface} is a Riemannian immersion $\phi: M^m \hookrightarrow L^m(\varepsilon)\times 
\R$ which is a critical point of the \textit{bienergy functional}, that is, such that its \textit{bitension field} $\tau_2(\phi)$ vanishes. A \textit{biconservative hypersurface} is defined by the vanishing of the tangential component of $\tau_2(\phi)$. Note that \textit{minimal immersions} are trivially biconservative and biharmonic. Therefore, in this paper we say that a biconservative or biharmonic hypersurface is \textit{proper} if it is non-minimal. We refer to \cite{MR4410183,MR4265170} for background on biharmonic and biconservative hypersurfaces.

A first investigation of proper biharmonic hypersurfaces $\phi:M^{m}\hookrightarrow L^m(\varepsilon)\times \mathbb{R}$ was carried out in the work \cite{MR4333889} of Maeta and Ou. To describe their main results it is convenient to introduce the \textit{angle function} $\alpha$. More precisely, let $\partial_t$ be the unit global vector field tangent to the second factor of the
ambient space $L^m(\varepsilon)\times \mathbb R$. We assume throughout the paper that $M^m$ is oriented and we decompose $\partial_t$ as follows:
\begin{equation}
\label{eq:dt}
\partial_t=T+\cos\alpha\,\eta
\end{equation}
where $\cos\alpha= \langle \partial_t, \eta\rangle$. Here $\eta$ denotes the unit normal vector field of the hypersurface, while $T$ is the component of $\partial_t$ along the tangent space to $M^m$. Now, the bitension field $\tau_2(\phi)$ can be described by:
\begin{proposition}[\cite{MR4333889}] \label{Prop-Ou} Let $\phi:M^{m}\hookrightarrow L^m(\varepsilon)\times \mathbb{R}$ be a hypersurface and denote by $H$ its mean curvature function. Then the bitension field $\tau_2(\phi)=\tau_2$ is given by
$$
\tau_2=\left[\tau_2\right]^{\top}+\left[\tau_2\right]^{\perp}
$$
where
\begin{equation}\label{BHP1}
\begin{aligned}
\left[\tau_2\right]^{\top}= & -2 m A( \grad H)-m^2 H \grad H-2 m(m-1) \varepsilon H \cos \alpha\, T 
\end{aligned}
\end{equation}
is the tangential component, while
\begin{equation}\label{BHP2}
\begin{aligned}
\left[\tau_2\right]^{\perp} = &m \big\{-\Delta H-  H |A|^2 + \varepsilon  (m-1) H \sin^2\alpha\big\}\eta 
\end{aligned}
\end{equation}
is the normal component.
\end{proposition}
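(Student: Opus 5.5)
The plan is to start from the general formula for the bitension field of a hypersurface $\phi:M^m\hookrightarrow N^{m+1}$ with unit normal $\eta$, shape operator $A$ and mean curvature $H=\frac1m\operatorname{trace}A$. In this setting $\tau(\phi)=mH\eta$ and $\tau_2(\phi)=-\Delta^\phi\tau(\phi)-\operatorname{trace}R^N(d\phi,\tau(\phi))d\phi$. Expanding $\Delta^\phi(mH\eta)$ with the Weingarten formula $\nabla^N_X\eta=-A X$ and the Codazzi equation should give the standard splitting
\begin{align*}
[\tau_2]^{\top}&=-2mA(\grad H)-m^2H\grad H+2mH\,(\Ric^N(\eta))^{\top},\\
[\tau_2]^{\perp}&=m\big\{-\Delta H-H|A|^2+H\Ric^N(\eta,\eta)\big\}\eta .
\end{align*}
The point is that $\operatorname{trace}\nabla A=m\grad H-(\Ric^N(\eta))^{\top}$ by Codazzi. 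This gives the term $\tfrac{m^2}{2}\grad H^2$, which together with the cross terms $2A(\grad(mH))$ yields the first two terms of \eqref{BHP1}. The curvature term contributes $-mH\operatorname{trace}R^N(\cdot,\eta)\cdot=mH\Ric^N(\eta)$ up to sign conventions. I would treat this general formula as known (it is the classical Jiang / Chen computation), or rederive it quickly in a local orthonormal frame that is geodesic at a point.

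The remaining task is to compute $\Ric^N(\eta)$ for $N=L^m(\varepsilon)\times\R$. The curvature tensor of the product is
\[
R^N(X,Y)Z=\varepsilon\big(\langle Y,Z\rangle X-\langle X,Z\rangle Y-\langle Y,\partial_t\rangle\langle Z,\partial_t\rangle X+\langle X,\partial_t\rangle\langle Z,\partial_t\rangle Y+\langle X,Z\rangle\langle Y,\partial_t\rangle\partial_t-\langle Y,Z\rangle\langle X,\partial_t\rangle\partial_t\big).
\]
This follows because the projection onto $TL^m$ is $X\mapsto X-\langle X,\partial_t\rangle\partial_t$, and the space form part has constant curvature $\varepsilon$. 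Taking the trace gives $\Ric^N(Z)=\varepsilon\big((m-1)Z-(m-1)\langle Z,\partial_t\rangle\partial_t\big)$, that is, $(m-1)\varepsilon$ times the projection onto $TL^m$.

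Applying this to $Z=\eta$ with $\langle\eta,\partial_t\rangle=\cos\alpha$ and using \eqref{eq:dt}, I get $\Ric^N(\eta)=\varepsilon(m-1)\big(\eta-\cos\alpha(T+\cos\alpha\,\eta)\big)$. Its tangential part is $-\varepsilon(m-1)\cos\alpha\,T$, and its normal part is $\varepsilon(m-1)(1-\cos^2\alpha)=\varepsilon(m-1)\sin^2\alpha$. Substituting these into the general splitting gives exactly \eqref{BHP1} and \eqref{BHP2}.

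The main obstacle is bookkeeping of signs: the sign convention for $R^N$, for $\Delta$ (here $\Delta=-\operatorname{trace}\nabla^2$ is implied by the $-\Delta H$ term), and for the tangential Ricci term. I would check the final formula against the known special case $\varepsilon=0$, where everything reduces to the Euclidean bitension, and against the curvature term in a space form, where $\Ric^N(\eta)=m\varepsilon\eta$ is purely normal.
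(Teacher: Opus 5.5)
Your proposal is correct. The general hypersurface splitting you use, with $\operatorname{trace}\nabla A = m\grad H-(\Ric^N(\eta))^{\top}$ from Codazzi (so the tangential Ricci term picks up the factor $2$), combined with $\Ric^N(Z)=\varepsilon(m-1)\bigl(Z-\langle Z,\partial_t\rangle\partial_t\bigr)$, gives exactly \eqref{BHP1} and \eqref{BHP2}; the paper itself gives no proof and only cites Maeta--Ou, whose result is presumably obtained by this same standard route (Ou's biharmonic hypersurface equations in a general ambient manifold plus the Ricci tensor of $L^m(\varepsilon)\times\R$).
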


It is well known that constant mean curvature (CMC) hypersurfaces in space forms are biconservative. This is not always true in $L^m(\varepsilon)\times\R$ and our first simple result is:
\begin{proposition}\label{Prop-CMC} Let $M^m$ be a {non-minimal} CMC hypersurface in $L^m(\varepsilon)\times\R$.
\begin{itemize}
\item[{\rm (i)}] If $M^m$ is biconservative, then it is a cylinder $M^{m-1}\times\R$, where $M^{m-1}$ is a non-minimal CMC hypersurface in $L^m(\varepsilon)$.
\item[{\rm (ii)}] If $M^m$ is biharmonic, then it is a cylinder $M^{m-1}\times\R$, where $M^{m-1}$ is a non-minimal CMC biharmonic hypersurface in $L^m(\varepsilon)$.
\end{itemize}
\end{proposition}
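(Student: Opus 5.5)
The plan is to read Proposition~\ref{Prop-Ou} with $H$ a nonzero constant. Since $\grad H=0$, the tangential part \eqref{BHP1} reduces to
$$[\tau_2]^{\top}=-2m(m-1)\varepsilon H\cos\alpha\,T .$$
If $M^m$ is biconservative, then because $m\ge 2$, $\varepsilon\neq 0$ and $H\neq0$, we get $\cos\alpha\,T=0$ on $M^m$.

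The next step is to show that this forces $\cos\alpha\equiv0$. From \eqref{eq:dt}, $|T|^2=\sin^2\alpha$, so $\cos\alpha\,T=0$ gives $\cos\alpha\sin\alpha=0$ at every point. Hence $\cos^2\alpha$ takes only the values $0$ and $1$. Since $\cos^2\alpha$ is continuous and $M^m$ is connected, it is constant.

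Suppose $\cos^2\alpha\equiv1$. Then $T\equiv0$ and $\eta=\pm\partial_t$, so $M^m$ is an open part of a slice $L^m(\varepsilon)\times\{t_0\}$. Slices are totally geodesic, since $\partial_t$ is parallel, and in particular minimal. This contradicts $H\neq0$. Therefore $\cos\alpha\equiv0$, i.e.\ $\partial_t=T$ is tangent to $M^m$ everywhere.

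With $\partial_t$ tangent to $M^m$, the integral curves of $\partial_t$ (vertical lines) lie in $M^m$. So $M^m$ is locally, and globally up to the usual completeness caveats, a vertical cylinder $M^{m-1}\times\R$, where $M^{m-1}=\pi(M^m)\subset L^m(\varepsilon)$ and $\pi$ is the projection onto the first factor. I would justify the product structure as follows. The normal $\eta$ is horizontal, and $\bar\nabla_{\partial_t}\eta=0$ in the product metric, since $\eta$ is invariant along the vertical flow. Hence $A\partial_t=0$. The shape operator of the cylinder then equals that of $M^{m-1}$ in $L^m(\varepsilon)$ on horizontal vectors and vanishes on $\partial_t$, giving $mH=(m-1)H_{M^{m-1}}$. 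Thus $M^{m-1}$ is a non-minimal CMC hypersurface of $L^m(\varepsilon)$, which proves (i).

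For (ii), biharmonic implies biconservative, so by (i) $M^m=M^{m-1}\times\R$. Then $\sin^2\alpha=1$ and $\Delta H=0$. The normal equation \eqref{BHP2} becomes $|A|^2=\varepsilon(m-1)$, and since $A\partial_t=0$ this says $|A_{M^{m-1}}|^2=\varepsilon(m-1)$. This is exactly the biharmonicity condition for a CMC hypersurface $M^{m-1}$ in the space form $L^m(\varepsilon)$, namely $|A|^2=\varepsilon\cdot(\text{ambient dimension}-1)$ with ambient dimension $m$. Hence $M^{m-1}$ is a non-minimal CMC biharmonic hypersurface in $L^m(\varepsilon)$.

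The only delicate points are the connectedness argument that excludes the slice case, and checking that $A\partial_t=0$, so that the second fundamental form of the cylinder reduces to that of its base.
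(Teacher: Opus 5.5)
Your proposal is correct and follows the paper's proof: with $H$ a nonzero constant, \eqref{BHP1} reduces biconservativity to $\cos\alpha\,T=0$; the slice case $T\equiv 0$ is excluded because slices are totally geodesic; and $\cos\alpha\equiv 0$ gives the cylinder. Your connectedness argument for the pointwise dichotomy, and your explicit reduction of \eqref{BHP2} to $|A_{M^{m-1}}|^2=\varepsilon(m-1)$ in (ii), fill in details the paper leaves implicit (it only says (ii) is ``analogous'').
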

The case (ii) with $\varepsilon=1$ was first obtained in \cite{MR3107694}. We also observe that when $\varepsilon=-1$ a CMC biharmonic hypersurface in $L^m(\varepsilon)$ is actually minimal.

Next, we recall two results of \cite{MR4333889} which are directly related to the analysis that we carry out in the present paper.
\begin{theorem}[\cite{MR4333889}]\label{Th-Maeta-Ou-1} Any constant angle proper biharmonic surface in $L^2(\varepsilon)\times \mathbb{R}$ is an open part of $\s^{1}(1/\sqrt{2})\times\R \subset \s^2\times\R$.
\end{theorem}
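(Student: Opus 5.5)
Write $m=2$ and let $\alpha$ be the constant angle. I would split into the cases $\cos\alpha=0$, $\sin\alpha=0$, and $\alpha\in(0,\pi/2)$ (up to orientation).

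\textbf{The case $\sin\alpha=0$.} Here $T=0$ and $\partial_t=\pm\eta$, so $M$ is an open part of a slice $L^2(\varepsilon)\times\{t_0\}$. Slices are totally geodesic, so $M$ is minimal, contradicting properness.

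\textbf{The case $\cos\alpha=0$.} Now $\partial_t=T$ is tangent to $M$, so $M$ is a vertical cylinder $\gamma\times\R$ over a curve $\gamma$ in $L^2(\varepsilon)$ with geodesic curvature $\kappa$. The shape operator has eigenvalues $\kappa$ and $0$, so $2H=\kappa$ and $|A|^2=\kappa^2$, and the curvature term in \eqref{BHP1} vanishes. Since $H$ is constant along the rulings, $\grad H$ is along $\gamma'$, which is the $\kappa$-eigendirection. Hence the tangential equation reads $(-4\kappa-4H)\,\grad H=-6H\,\grad H=0$, which forces $H$ to be constant on the open set where $H\ne0$. The normal equation \eqref{BHP2} becomes $-H\kappa^2+\varepsilon H=0$, i.e.\ $\kappa^2=\varepsilon$. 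This is impossible for $\varepsilon=-1$. For $\varepsilon=1$ it gives $\kappa=1$, so $\gamma$ is the circle $\s^1(1/\sqrt2)$ and we obtain the claimed example. (Equivalently, one can quote Proposition~\ref{Prop-CMC}(ii) once $H$ is known to be constant.)

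\textbf{The case $0<\alpha<\pi/2$, which I expect to be the main obstacle.} Here I would use the standard constant-angle structure:
\[
\nabla_X T=\cos\alpha\, AX, \qquad AT=0 .
\]
The second identity follows by differentiating $\langle\partial_t,\eta\rangle=\cos\alpha$ and using that $\partial_t$ is parallel. So $T$, with $|T|=\sin\alpha$, is a principal direction with eigenvalue $0$. Take $E_1=T/|T|$ and $E_2\perp E_1$ with $AE_2=\lambda E_2$; then $2H=\lambda$.

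Next I would show that $\grad H$ has no $E_1$-component, i.e.\ $T(\lambda)=0$. One route is Codazzi for the product ambient space, where the ambient curvature term involves $\cos\alpha$ and $T$. The other is the explicit local description of constant-angle surfaces as graphs $t=\cot\alpha\cdot(\text{distance function})$ over $L^2(\varepsilon)$. Assuming this holds, $\grad H=E_2(H)E_2$ and the tangential equation splits:
\begin{itemize}
\item the $E_1$-component is $-12\varepsilon H\cos\alpha\,\sin\alpha=0$, forcing $H=0$;
\item the $E_2$-component is $(-4\lambda-4H)E_2(H)=0$.
\end{itemize}
The first already contradicts properness. Even without the full claim, the $E_1$-component is $-4HT(H)/|T|-12\varepsilon H\cos\alpha|T|$. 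Combined with the Codazzi relation along $T$, this should give an ODE for $H$ along the flow of $T$ that is incompatible with $H\ne0$.

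The delicate step is computing the Codazzi equation, or the explicit parametrization, carefully enough to pin down $T(H)$, together with its interplay with the normal equation.
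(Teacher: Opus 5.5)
The cases $\sin\alpha=0$ and $\cos\alpha=0$ are handled correctly. The paper obtains this statement in the same way, as the case $m=2$ of Corollary~\ref{Cor-1} via Theorem~\ref{Th-constant-angle}. There is one harmless slip: with $\kappa=2H$ the tangential coefficient is $-4\kappa-4H=-12H$, not $-6H$.

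The gap is in the case $0<\alpha<\pi/2$. Your main argument rests on the claim $T(\lambda)=0$, and that claim is false. Apply the Codazzi equation $(\nabla_XA)Y-(\nabla_YA)X=\varepsilon\cos\alpha\,(\langle Y,T\rangle X-\langle X,T\rangle Y)$ with $X=E_2$ and $Y=E_1=T/\sin\alpha$. Use $AE_1=0$ and $\nabla_{E_2}E_1=\frac{\cos\alpha}{\sin\alpha}\lambda E_2$, which follows from \eqref{eq:nablaXT}. One gets
\[
T(\lambda)=-\cos\alpha\,\bigl(\lambda^2+\varepsilon\sin^2\alpha\bigr).
\]
This is just the Riccati equation $\kappa'=\varepsilon+\kappa^2$ for the curvature of the parallel curves $f_s$. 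Your graph description shows the same thing, since there $H=-\frac12\sin\alpha\,\kappa(s)$. For $\varepsilon=1$ the right-hand side never vanishes, so $\grad H$ always has a nonzero $E_1$-component. Your immediate contradiction therefore only covers the special horocycle case $\varepsilon=-1$, $\lambda^2=\sin^2\alpha$. There $H$ is constant and Proposition~\ref{Prop-CMC} already applies.

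Your fallback is the right idea; in intrinsic form it is the paper's argument in the proof of Theorem~\ref{Th-constant-angle}. However, you leave it at ``should give'', and your coefficient is wrong. For $m=2$ one has $2m(m-1)=4$, so the $E_1$-component of \eqref{BHP1} is $-4H\,T(H)/|T|-4\varepsilon H\cos\alpha\,|T|$, not $-12\varepsilon H\cos\alpha\,|T|$. On the nonempty open set where $H\neq0$ this gives $T(\lambda)=-2\varepsilon\cos\alpha\sin^2\alpha$. Comparing with the Codazzi identity above yields $\lambda^2=\varepsilon\sin^2\alpha$.
\begin{itemize}
\item For $\varepsilon=-1$ this is impossible.
\item For $\varepsilon=1$ it makes $\lambda=2H$ locally constant, which contradicts $T(\lambda)=-2\cos\alpha\sin^2\alpha\neq0$.
\end{itemize}
In the paper's Tojeiro coordinates this is the comparison of $\partial_sH_s=2\varepsilon$ with $\partial_sH_s=\varepsilon+\kappa^2$. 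The normal equation plays no role: biconservativity together with this Codazzi computation already rules out $0<|\cos\alpha|<1$. That Codazzi computation is the step you need to supply.
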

\begin{theorem}[\cite{MR4333889}]\label{Th-Maeta-Ou-2}Let $m \geq 2$. Then any totally umbilical biharmonic hypersurface in $L^m(\varepsilon)\times \mathbb{R}$ is totally geodesic.
\end{theorem}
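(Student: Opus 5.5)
The statement immediately above is Theorem~\ref{Th-Maeta-Ou-2}: for $m\ge 2$, a totally umbilical biharmonic hypersurface in $L^m(\varepsilon)\times\R$ is totally geodesic. The plan is to first squeeze as much as possible out of the tangential equation \eqref{BHP1} using umbilicity, and then feed the outcome into the normal equation \eqref{BHP2}.

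Write $A=H\,\mathrm{Id}$. The tangential part of $\tau_2$ then reads
$$
-m(m+2)H\grad H-2m(m-1)\varepsilon H\cos\alpha\, T=0 .
$$
On the open set $U=\{H\neq 0\}$ this gives
$$
(m+2)\grad H=-2(m-1)\varepsilon\cos\alpha\, T .
$$
Next I use the standard identities for $\partial_t$, which are parallel in the product:
$$
\nabla_X T=\cos\alpha\, A X,\qquad X(\cos\alpha)=-\langle AX,T\rangle .
$$
With $A=H\,\mathrm{Id}$ they become $\nabla_X T=H\cos\alpha\, X$ and $\grad\cos\alpha=-HT$. I also use the Codazzi equation in $L^m(\varepsilon)\times\R$:
$$
(\nabla_X A)Y-(\nabla_Y A)X=\varepsilon\cos\alpha\,\big(\langle Y,T\rangle X-\langle X,T\rangle Y\big).
$$
For umbilical $A$ the left side is $X(H)Y-Y(H)X$. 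Taking $X,Y$ independent (possible since $m\ge2$), this forces $\grad H=-\varepsilon\cos\alpha\, T$.

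Comparing the two expressions for $\grad H$ on $U$ gives $(m+2)\varepsilon\cos\alpha\, T=2(m-1)\varepsilon\cos\alpha\,T$, that is, $(m+2-2m+2)\cos\alpha\,T=(4-m)\cos\alpha\, T=0$. Hence for $m\neq4$ we get $\cos\alpha\,T=0$ and thus $\grad H=0$. The case $m=4$ needs separate handling. I would differentiate $\grad H=-\varepsilon\cos\alpha\,T$ and take the trace: $\Delta H=-\Div(\varepsilon\cos\alpha T)=\varepsilon H|T|^2-m\varepsilon H\cos^2\alpha$. Substituting this into the normal equation gives a polynomial relation in $H,\cos\alpha$. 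The Hessian of $H$ must also be symmetric and consistent, which I expect forces $\cos\alpha\, T=0$ as well. Alternatively, the Gauss equation plus constancy relations can do this job.

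Once $H$ is constant on a component of $U$, the normal equation \eqref{BHP2} becomes
$$
H\big(-mH^2+\varepsilon(m-1)\sin^2\alpha\big)=0 .
$$
So $\sin^2\alpha$ is constant. If $\sin\alpha\neq0$ and $\cos\alpha\neq 0$, then $T\neq0$, and $\cos\alpha\,T=0$ is contradicted. If $\cos\alpha=0$, then $\grad\cos\alpha=-HT=0$ with $|T|=1$, so $H=0$, a contradiction. If $\sin\alpha=0$, then $H=0$ directly. Therefore $U$ is empty, $H\equiv0$, and umbilicity gives $A=0$.

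The main obstacle is the exceptional dimension $m=4$ in the comparison step. There I would rely on differentiating $\grad\cos\alpha=-HT$ together with $\grad H=-\varepsilon\cos\alpha\,T$. This yields the first integral $H^2+\varepsilon\cos^2\alpha=\text{const}$, since $\grad(H^2)=-2\varepsilon H\cos\alpha\,T=-\varepsilon\grad(\cos^2\alpha)$. Combining this with \eqref{BHP1}, which relates $H^2$ to $\cos^2\alpha$ up to constants, and with the normal equation should produce a contradiction by analyticity or by counting independent relations.
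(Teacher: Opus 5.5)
Your argument for $m\neq 4$ is correct. For $A=H\,\mathrm{Id}$, Codazzi gives $\grad H=-\varepsilon\cos\alpha\,T$ everywhere, and \eqref{BHP1} gives $(m+2)\grad H=-2(m-1)\varepsilon\cos\alpha\,T$ on $U=\{H\neq0\}$. Hence $(4-m)\cos\alpha\,T=0$, and your case analysis with $\Delta H=0$ in \eqref{BHP2} finishes. The paper does not prove this theorem; it quotes it from \cite{MR4333889}. Its own umbilical computations are in Subsection~\ref{Sub-sec-bic} and in the first step of the proof of Theorem~\ref{thm:rigidity}.

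\textbf{The gap is $m=4$, and it is not a side case.} There \eqref{BHP1} holds automatically for every totally umbilical hypersurface. So non-CMC umbilical biconservative examples exist (for instance the graphs over $\s^4$ built in the paper), and the whole theorem rests on \eqref{BHP2}. You leave this at ``should produce a contradiction'', and the tools you suggest do not supply it:
\begin{itemize}
\item The Hessian $\nabla^2H(X,Y)=\varepsilon H\bigl(\langle X,T\rangle\langle Y,T\rangle-\cos^2\alpha\langle X,Y\rangle\bigr)$ is automatically symmetric, so it gives nothing.
\item \eqref{BHP1} adds nothing beyond Codazzi when $m=4$.
\item Both formulas you write have the wrong sign. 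Since $\Delta=-\Div\grad$, one has $\Delta H=\varepsilon\Div(\cos\alpha\,T)=\varepsilon H(m\cos^2\alpha-\sin^2\alpha)$. Since $\grad\cos^2\alpha=-2H\cos\alpha\,T$, the conserved quantity is $H^2-\varepsilon\cos^2\alpha$, not $H^2+\varepsilon\cos^2\alpha$. Compare $\grad(H^2+u^2)=0$ for $\varepsilon=-1$ in the proof of Theorem~\ref{thm:rigidity}, or $H^2-\cos^2\alpha=c^2-1$ for the examples in $\s^4\times\R$.
\end{itemize}

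\textbf{The missing step is short.}
\begin{enumerate}
\item With the correct $\Delta H$ and $|A|^2=4H^2$, the bracket in \eqref{BHP2} equals $4H\bigl(\varepsilon(\sin^2\alpha-\cos^2\alpha)-H^2\bigr)$. This matches the normal equation computed for the $\s^4\times\R$ examples in Subsection~\ref{Sub-sec-bic}. So on $U$ you get $H^2=\varepsilon(1-2\cos^2\alpha)$.
\item Combine this with $H^2-\varepsilon\cos^2\alpha=\mathrm{const}$ to get $3\varepsilon\cos^2\alpha=\mathrm{const}$. Hence $\cos^2\alpha$ and $H^2$, and therefore $H$, are constant on each component of $U$.
\item Then $\grad H=0$ forces $\cos\alpha\,T=0$, and your end-game for $m\neq4$ applies verbatim.
\end{enumerate}
With this step inserted and the two signs corrected, the proof is complete.
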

Our first theorem is:
\begin{theorem}\label{Th-constant-angle}
Let $\phi:M^{m}\hookrightarrow L^m(\varepsilon)\times \mathbb{R}$  be a proper biconservative (biharmonic) hypersurface with constant angle $\alpha$. Then, locally, 
$M^{m}$ is a cylinder $M^{m-1}\times \R$ and $\phi(x,t)=(\bar\phi(x),t)$, where $\bar\phi:M^{m-1}\hookrightarrow L^m(\varepsilon)$ is a proper biconservative (biharmonic) hypersurface.
\end{theorem}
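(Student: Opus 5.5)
The plan is to exploit the standard structure equations for hypersurfaces in $L^m(\varepsilon)\times\R$ involving $T$ and $\alpha$. Since $\partial_t$ is parallel in the ambient space, differentiating \eqref{eq:dt} along $X$ tangent to $M^m$ and splitting into tangential and normal parts gives
$$
\nabla_X T=\cos\alpha\, A X,\qquad X(\cos\alpha)=-\langle A X, T\rangle .
$$
If $\alpha$ is constant, the second identity says $AT=0$. The argument then splits according to whether $\cos\alpha$ vanishes or not.

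\emph{Case $\cos\alpha\neq0$.} I show this case cannot occur for a proper hypersurface, or at least not on an open set. If $T=0$ on an open set, $M^m$ is an open part of a slice $L^m(\varepsilon)\times\{t_0\}$, which is totally geodesic and hence minimal, a contradiction. So $T\neq0$, and $\nabla_X T=\cos\alpha\,AX$. Using the Codazzi equation of $L^m(\varepsilon)\times\R$,
$$
(\nabla_XA)Y-(\nabla_YA)X=\varepsilon\cos\alpha\,(\langle Y,T\rangle X-\langle X,T\rangle Y),
$$
together with $AT=0$, I take the trace of the Codazzi equation with $Y=T$. This gives an expression of the form $A(\grad H)$ in terms of $T$, and the computation should yield something like $m\,T(H)=-\varepsilon(m-1)\cos\alpha\,|T|^2$, with $T(H)$ related to $A^2$ terms. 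Then I plug into the tangential part \eqref{BHP1} and take the inner product with $T$. Since $\langle A\grad H,T\rangle=\langle\grad H,AT\rangle=0$, biconservativity gives
$$
-m^2H\,T(H)-2m(m-1)\varepsilon H\cos\alpha\,|T|^2=0 .
$$
Combined with the Codazzi relation for $T(H)$, this forces $H\cos\alpha\,|T|^2=0$ (up to a nonzero constant factor), hence $H=0$, contradicting properness. This step is the main obstacle: I need to get exactly the correct Codazzi-derived identity for $T(H)$ so that the coefficients do not cancel. I would first try computing $\operatorname{trace}(\nabla_T A)=mT(H)$ directly from $\langle(\nabla_{E_i}A)T,E_i\rangle$, using $AT=0$ and $\nabla T=\cos\alpha A$, which gives $-\cos\alpha|A|^2$ plus the curvature term. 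If the coefficients do not close up, I would also use the normal component or differentiate $AT=0$ further.

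\emph{Case $\cos\alpha=0$.} Then $\partial_t=T$ is tangent to $M^m$ and parallel, with $AT=0$. Standard arguments then show that $M^m$ is locally a cylinder $M^{m-1}\times\R$ with $\phi(x,t)=(\bar\phi(x),t)$. The shape operator of $M^m$ is that of $\bar\phi$ extended by $0$ in the $\partial_t$ direction, so $mH=(m-1)\bar H$ and $|A|^2=|\bar A|^2$, and all quantities are independent of $t$. With $\cos\alpha=0$ and $\sin^2\alpha=1$, \eqref{BHP1} and \eqref{BHP2} reduce precisely to the biconservative and biharmonic equations of $\bar\phi$ in the space form $L^m(\varepsilon)$, namely $2\bar A\grad\bar H+(m-1)\bar H\grad\bar H=0$ and $-\Delta\bar H-\bar H|\bar A|^2+\varepsilon(m-1)\bar H=0$, up to constant rescaling. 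Properness transfers from $M^m$ to $\bar\phi$ since $\bar H$ is a nonzero multiple of $H$.
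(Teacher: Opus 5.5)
Your treatment of the case $\cos\alpha=0$ is fine and matches the paper. The case $\cos\alpha\neq0$, however, has a genuine gap at exactly the step you flagged.

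\textbf{The claimed conclusion does not follow.} Tracing the Codazzi equation with $Y=T$, and using $AT=0$ and $\nabla_XT=\cos\alpha\,AX$, gives
\[
mT(H)=-\cos\alpha\,|A|^2-\varepsilon(m-1)\cos\alpha\,|T|^2 .
\]
Insert this into the $T$-component of the biconservative equation, $mH\,T(H)+2(m-1)\varepsilon H\cos\alpha\,|T|^2=0$. On the open set where $H\neq0$, what you get is not $H\cos\alpha|T|^2=0$ but
\[
|A|^2=\varepsilon(m-1)\sin^2\alpha .
\]
For $\varepsilon=-1$ this is impossible because $\sin\alpha\neq0$, so the case of $\mathbb H^m\times\R$ is settled. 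For $\varepsilon=1$ it only says that $|A|^2$ equals a positive constant, which is no contradiction. This is precisely the paper's identity $\frac{1}{m-1}\sum\kappa_i^2=\varepsilon$, since $|A|^2=\rho^2\sum\kappa_i^2$ and $\rho^2=\sin^2\alpha$. Your fallback of using the normal component is not available, because the theorem is also stated for merely biconservative hypersurfaces.

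\textbf{The missing idea is a second-order argument along $T$.} The paper gets it from Tojeiro's theorem: $AT=0$ makes $T$ principal, so locally $\phi=(f_s(x),a(s))$ with $\rho$ constant. The Riccati equations $\kappa_i'=\varepsilon+\kappa_i^2$ of the parallel hypersurfaces then give $Q''>0$ for $Q=\frac{1}{m-1}\sum\kappa_i^2$, so $Q\equiv1$ is impossible.

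You can do the same intrinsically, in line with your idea of differentiating $AT=0$ further. The Codazzi equation with $Y=T$ gives $\nabla_TA=-\cos\alpha\,(A^2+\varepsilon|T|^2P)$, where $P$ is the orthogonal projection of $TM$ onto $T^\perp$. Since $\cos\alpha$ and $|T|$ are constant, it follows that
\[
T(|A|^2)=-2\cos\alpha\bigl(\operatorname{tr}A^3+\varepsilon mH|T|^2\bigr),\qquad
T\bigl(T(|A|^2)\bigr)=2\cos^2\alpha\bigl(3\operatorname{tr}A^4+4\varepsilon|T|^2|A|^2+(m-1)|T|^4\bigr).
\]
For $\varepsilon=1$, $T\neq0$ and $\cos\alpha\neq0$, the right-hand side of the second identity is strictly positive, which contradicts the constancy of $|A|^2$. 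This is the intrinsic form of the paper's $Q''>0$. With this addition your route becomes a valid alternative that avoids Tojeiro's structure theorem altogether.
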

The following corollary provides, using different methods, an extension of Theorem~\ref{Th-Maeta-Ou-1}.
\begin{corollary}\label{Cor-1}
Let $m=2$ or $3$. Then any constant angle proper biharmonic hypersurface in $L^m(\varepsilon)\times \mathbb{R}$ is an open part of $\s^{m-1}(1/\sqrt{2})\times\R \subset \s^m\times\R$.
\end{corollary}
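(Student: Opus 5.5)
By Theorem~\ref{Th-constant-angle}, a constant angle proper biharmonic hypersurface $M^m$ in $L^m(\varepsilon)\times\R$ is, locally, a cylinder $M^{m-1}\times\R$ with $\phi(x,t)=(\bar\phi(x),t)$. Here $\bar\phi:M^{m-1}\hookrightarrow L^m(\varepsilon)$ is a proper biharmonic hypersurface. The plan is therefore to reduce Corollary~\ref{Cor-1} to the known classification of proper biharmonic hypersurfaces in space forms of dimension $m=2$ and $m=3$. Once this is done, we lift the result back to the product.

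\textbf{Step 1 (hyperbolic case).} For $\varepsilon=-1$, proper biharmonic curves in $\h^2$ and proper biharmonic surfaces in $\h^3$ do not exist. These are the results of Caddeo--Montaldo--Oniciuc for curves and surfaces in $\h^3$, and of Chen for surfaces in $\h^3$. So no proper biharmonic constant angle hypersurface exists when $\varepsilon=-1$.

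\textbf{Step 2 (spherical case).} For $\varepsilon=1$ we use the known classifications:
\begin{itemize}
\item[(a)] the only proper biharmonic curves in $\s^2$ are arcs of circles of radius $1/\sqrt2$;
\item[(b)] the only proper biharmonic surfaces in $\s^3$ are open parts of $\s^2(1/\sqrt2)$ (Caddeo--Montaldo--Oniciuc).
\end{itemize}
Hence $\bar\phi(M^{m-1})$ is an open part of the small hypersphere $\s^{m-1}(1/\sqrt2)\subset\s^m$.

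\textbf{Step 3 (lifting back).} The cylinder over this hypersphere is an open part of $\s^{m-1}(1/\sqrt2)\times\R\subset\s^m\times\R$. We also check that this cylinder is indeed proper biharmonic. Using Proposition~\ref{Prop-Ou} with $\cos\alpha=0$, constant $H$ and $\sin^2\alpha=1$, the tangential component vanishes. The normal condition becomes $|A|^2=m-1$, which is the usual condition for $\s^{m-1}(1/\sqrt2)$ in $\s^m$.

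The only delicate point is the passage from local to global. The theorem gives the conclusion only locally, so we need a connectedness argument: $M^m$ is analytic, or we can use that the local pieces overlap. This shows that the whole $M^m$ is an open part of the single cylinder. The real content rests on Theorem~\ref{Th-constant-angle} and on the low-dimensional classifications in space forms, which we cite rather than reprove.
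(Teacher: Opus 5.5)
Your proposal is correct and follows the paper's own argument. You reduce, via Theorem~\ref{Th-constant-angle}, to a cylinder over a proper biharmonic hypersurface of $L^m(\varepsilon)$, and then invoke the known classifications for $m=2,3$: non-existence in $\h^2$ and $\h^3$, and $\s^{m-1}(1/\sqrt2)$ in $\s^2$ and $\s^3$. Your check that the cylinder is biharmonic and your local-to-global remark are details the paper leaves implicit, but they do not change the approach.
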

To state our next corollary, we recall that, by definition, an {\em isoparametric hypersurface} in a Riemannian manifold is a hypersurface with constant mean curvature whose nearby (locally defined) parallel hypersurfaces also have constant mean curvature. We then have \begin{corollary}\label{Cor-2}
Any  proper biconservative (biharmonic) isoparametric hypersurface in  $L^m(\varepsilon)\times\R$ is a product $M^m=M^{m-1}\times\R$, where $M^{m-1}$ is a proper biconservative (biharmonic) isoparametric hypersurface of $L^m(\varepsilon)$.
\end{corollary}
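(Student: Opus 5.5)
An isoparametric hypersurface is in particular CMC. We are also assuming it is proper, hence non-minimal. The plan is to reduce Corollary~\ref{Cor-2} to Proposition~\ref{Prop-CMC}, and only then handle the isoparametric condition on the factor $M^{m-1}$.

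\emph{Step 1.} Since $H$ is a nonzero constant, $\grad H=0$. By Proposition~\ref{Prop-CMC}(i) (respectively (ii) in the biharmonic case), $M^m$ is a cylinder $M^{m-1}\times\R$ and $\phi(x,t)=(\bar\phi(x),t)$. Here $\bar\phi:M^{m-1}\hookrightarrow L^m(\varepsilon)$ is a non-minimal CMC hypersurface, which in the biharmonic case is itself biharmonic. Concretely, the biconservative equation \eqref{BHP1} with $\grad H=0$ reduces to $H\cos\alpha\,T=0$. This forces $\cos\alpha\, T=0$ on $M$. A connectedness and continuity argument then gives $\cos\alpha\equiv 0$, so $\partial_t$ is tangent and $M$ is vertical, i.e.\ a cylinder. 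Alternatively, we can simply quote Proposition~\ref{Prop-CMC}.

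\emph{Step 2.} We show that the factor $M^{m-1}$ is isoparametric in $L^m(\varepsilon)$. The unit normal of the cylinder is $\eta=(\bar\eta,0)$, where $\bar\eta$ is the unit normal of $\bar\phi$ in $L^m(\varepsilon)$. Because $L^m(\varepsilon)\times\R$ is a Riemannian product, normal geodesics starting from $(\bar\phi(x),t)$ in direction $\eta$ stay in the slice $L^m(\varepsilon)\times\{t\}$. Therefore the parallel hypersurface at distance $s$ is $M^{m-1}_s\times\R$, where $M^{m-1}_s$ is the parallel hypersurface of $\bar\phi$ in $L^m(\varepsilon)$. The shape operator of a cylinder is $\bar A\oplus 0$, so the mean curvatures satisfy $mH_s=(m-1)\bar H_s$. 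Hence constancy of $H_s$ for all small $s$ is equivalent to constancy of $\bar H_s$, i.e.\ $M^{m-1}$ is isoparametric. In the same way $H=\tfrac{m-1}{m}\bar H\neq 0$, so $\bar\phi$ is proper.

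\emph{Step 3.} Biconservativity of $\bar\phi$ is automatic, since CMC hypersurfaces in space forms are biconservative. In the biharmonic case, biharmonicity of $\bar\phi$ comes from Proposition~\ref{Prop-CMC}(ii). One can double-check this directly with \eqref{BHP2}: with $\sin\alpha=1$ and $|A|^2=|\bar A|^2$ it reduces to $|\bar A|^2=\varepsilon(m-1)$, which is the biharmonic condition for CMC hypersurfaces in $L^m(\varepsilon)$.

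The main obstacle is really Step 1, namely the passage from $\cos\alpha\,T=0$ to a global cylinder structure. There one must rule out open sets where $T=0$, which are horizontal slices with $\cos\alpha=\pm1$. Such slices are totally geodesic and hence minimal, contradicting $H\neq0$. This argument is already contained in Proposition~\ref{Prop-CMC}, so the remaining work is only the parallel-hypersurface computation of Step 2.
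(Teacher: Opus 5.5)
Your proof is correct, but it follows a different and more elementary route than the paper's.

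\textbf{The paper's route.} The paper quotes an external classification: every isoparametric hypersurface of $L^m(\varepsilon)\times\R$ has constant principal curvatures and constant angle function. It then feeds the constant-angle property into Theorem~\ref{Th-constant-angle}. That theorem's proof, in the range $0<|\cos\alpha|<1$, needs Tojeiro's description and the identity $\kappa_i'=\varepsilon+\kappa_i^2$ for parallel hypersurfaces. The isoparametricity of the factor $M^{m-1}$ is left implicit there. It follows because the principal curvatures of the cylinder are those of $M^{m-1}$ together with $0$, hence constant.

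\textbf{Your route.} You use only the fact that an isoparametric hypersurface is CMC. Then \eqref{BHP1} collapses to $H\cos\alpha\,T=0$, and Proposition~\ref{Prop-CMC} already gives the cylinder. Your connectedness argument for this step (the open sets $\{T\neq0\}$ and $\{\cos\alpha\neq0\}$ are disjoint and cover $M$) is a bit more careful than the paper's proof of Proposition~\ref{Prop-CMC}. You use the rest of the definition of isoparametric exactly once, in Step~2. There the product structure shows that the parallel hypersurfaces of $M^{m-1}\times\R$ are the cylinders over the parallel hypersurfaces of $\bar\phi$, with $mH_s=(m-1)\bar H_s$.

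\textbf{Comparison.} Your argument is self-contained. It avoids both the cited classification and Theorem~\ref{Th-constant-angle}, and it shows that the cylinder structure comes from the CMC condition alone. The paper's proof is shorter on the page and also records that the principal curvatures are constant. However, it relies on considerably heavier input than the statement actually needs.
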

Next, we turn our attention to the case that \textit{the angle function $\alpha$ is non-constant}. To this end, we begin by outlining some general results obtained by Tojeiro in \cite{MR2738910}. 
More specifically, we recall that hypersurfaces $\phi: M^m \hookrightarrow L^m(\varepsilon) \times \mathbb{R}$ for which $T$ \textit{is everywhere a principal direction} have been studied in \cite{MR2738910} and are fully described by the following construction. We consider the natural embeddings $\s^m\times \R\subset \R^{m+1}\times\R$ and $\h^m\times \R\subset \R^{m+1}_1\times\R$, where $\R^{m+1}_1$ is the Lorentzian space of dimension $m+1$. Now, let {$f_0: M^{m-1} \rightarrow L^m(\varepsilon)$} be an oriented hypersurface 
and $f_s: M^{m-1} \rightarrow L^m(\varepsilon)$ the family of its parallel hypersurfaces, that is,
\begin{equation*}\label{eq-paral-hyp}
f_s(x)=C_\varepsilon(s) f_0(x)+S_\varepsilon(s) N(x),
\end{equation*}
where $N$ is a unit normal vector field to {$f_0$} and
\[
C_\varepsilon(s)=\left\{\begin{array}{l}
\cos s, \text { if } \varepsilon=1 \\
\cosh s, \text { if } \varepsilon=-1
\end{array} \,\,, \quad  \quad S_\varepsilon(s)=\left\{\begin{array}{l}
\sin s, \text { if } \varepsilon=1 \\
\sinh s, \text { if } \varepsilon=-1
\end{array}\right.\right. \,.
\]
We consider $M^m=M^{m-1} \times I$ for some real interval $I$ and immersions
\[
\phi: M^m=M^{m-1} \times I \rightarrow L^m(\varepsilon) \times \mathbb{R}
\]
given by
\begin{equation}\label{eq:phi-tojeiro-family}
\phi(x, s)=f_s(x)+a(s) \partial_t\,,
\end{equation}
where $a: I \rightarrow \mathbb{R}$ is a smooth increasing function.
 We refer to the hypersurfaces of the form \eqref{eq:phi-tojeiro-family} as the {\em Tojeiro family}.  The following result of Tojeiro is important for our investigation:
\begin{theorem}[\cite{MR2738910}]\label{teo:main-tojeiro}
Any map $\phi$ of the Tojeiro family \eqref{eq:phi-tojeiro-family} defines, at regular points, a hypersurface that has $T$ as
a principal direction. Conversely, any hypersurface $\phi: M^m \hookrightarrow L^m(\varepsilon) \times \mathbb{R}$, $m\geq 2$, with $\cos \alpha \neq 0$ everywhere and having T as a principal direction is locally described as in \eqref{eq:phi-tojeiro-family}.
\end{theorem}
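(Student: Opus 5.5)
We propose to prove Tojeiro's theorem (Theorem~\ref{teo:main-tojeiro}) in two parts: a direct computation for the family \eqref{eq:phi-tojeiro-family}, and a reconstruction argument for the converse.

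\emph{Direct part.} Work in the flat model $\R^{m+1}\times\R$ (resp.\ $\R^{m+1}_1\times\R$) and differentiate $\phi(x,s)=f_s(x)+a(s)\partial_t$. Since $\partial_s f_s = -\varepsilon S_\varepsilon(s)f_0+C_\varepsilon(s)N = N_s$ is the unit normal of $f_s$ in $L^m(\varepsilon)$, we get
\[
\phi_s=N_s+a'(s)\partial_t .
\]
For $X$ tangent to $M^{m-1}$, $d\phi(X)=df_s(X)$ is tangent to $f_s$ and hence orthogonal to both $N_s$ and $\partial_t$. A unit normal is therefore
\[
\eta=\frac{-a'N_s+\partial_t}{\sqrt{1+a'^2}},\qquad \cos\alpha=\frac{1}{\sqrt{1+a'^2}}.
\]
Consequently $T=\partial_t-\cos\alpha\,\eta$ is a multiple of $\phi_s$. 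To show that $T$ is principal, it suffices to check that $\tilde\nabla_{\phi_s}\eta$ is proportional to $\phi_s$, which reduces to computing $\partial_s\eta$. We have $\partial_s N_s=-\varepsilon f_s$, which is normal to $L^m(\varepsilon)$ in the flat model and so is discarded. Also $\partial_s(a'/\sqrt{1+a'^2})$ and $\partial_s(1/\sqrt{1+a'^2})$ are functions of $s$ only. Hence $\partial_s\eta$ lies in $\mathrm{span}\{N_s,\partial_t\}$ and is orthogonal to $\eta$, so it is a multiple of $\phi_s$. Regularity requires $df_s$ to be injective and $\phi_s\neq0$; the latter always holds.

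\emph{Converse.} Assume $AT=\lambda T$ with $\cos\alpha\neq0$. We may also assume $T\neq0$, since otherwise $M$ is locally a slice, which is the degenerate case.
\begin{enumerate}
\item Use the standard identities: $\partial_t$ is parallel in the ambient space, which gives $\nabla_XT=\cos\alpha\,AX$ and $X(\cos\alpha)=-\langle AX,T\rangle$. Thus $\grad\cos\alpha=-\lambda T$, so $\alpha$ is constant along $T^\perp$. The height function $h=\langle\phi,\partial_t\rangle$ has gradient $T$, so its level sets are exactly the integral manifolds of the distribution $T^\perp$, and $\cos\alpha$ is constant on each level set.
\item Show that the normalized field $T/|T|$ has geodesic integral curves in $M$. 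Indeed, $\nabla_T T=\cos\alpha\,\lambda T$ is parallel to $T$, and $|T|^2=\sin^2\alpha$ is constant on level sets. Choose local coordinates $(x,s)$, with $x$ on a fixed level set $M^{m-1}$ and $s$ the arclength of the projection of these curves to $L^m(\varepsilon)$.
\item Project the level set of $h$ to $L^m(\varepsilon)$; this gives $f_0$, a hypersurface of $L^m(\varepsilon)$. Its unit normal $N$ in $L^m(\varepsilon)$ is the normalized projection of $T$; equivalently $N$ is proportional to $\eta-\cos\alpha\,\partial_t$.
\item The key step: prove that the projected $s$-curves are the normal geodesics $s\mapsto C_\varepsilon(s)f_0+S_\varepsilon(s)N$, and that the height along them is a function $a(s)$ independent of $x$. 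The $x$-independence follows because $\alpha$, and hence the height increment per unit arclength, is constant on level sets; so after reparametrization $a'(s)=\tan\alpha$ depends only on $s$. Geodesicity of the projection comes from the principal-direction condition. Since $\eta$ lies in $\mathrm{span}\{N,\partial_t\}$, differentiating along $T$ stays in that plane, so the acceleration of the projected curve in $L^m(\varepsilon)$ is normal to it.
\end{enumerate}

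The main obstacle is step~4: showing simultaneously that the projected curves are geodesics hitting $f_0$ orthogonally, and that they are parametrized consistently, so that $\phi=f_s+a(s)\partial_t$ holds with a single function $a$. I would first verify it in the flat model by computing $\partial_s$ of the projection and using $A\phi_s=\lambda\phi_s$. Finally, $a$ is increasing after orienting so that $\cos\alpha>0$ and choosing $T$ to point upward, so that $\tan\alpha\ge0$.
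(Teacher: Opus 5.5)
The paper gives no proof of this statement; it is quoted from Tojeiro \cite{MR2738910}. Your argument is the standard one and is correct. The direct part is a complete computation. For the converse, projecting the level sets of the height function $h$ to $L^m(\varepsilon)$ and showing that they form a parallel family is the right strategy.

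The only place your write-up is still a sketch is step~4, and it closes in a few lines. Step~2 (geodesicity of $T/|T|$ in $M^m$) is not what is needed. What matters is geodesicity of the \emph{projected} curves in $L^m(\varepsilon)$. Your phrase ``differentiating along $T$ stays in that plane'' holds precisely because $\bar\nabla_T\eta=-AT=-\lambda T$. Concretely, since $\partial_t$ is parallel and $T(\cos\alpha)=-\lambda|T|^2$,
\[
\bar\nabla_T T=\bar\nabla_T(\partial_t-\cos\alpha\,\eta)=\lambda\sin^2\alpha\,\eta+\lambda\cos\alpha\,T .
\]
The projected integral curve of $T$ has velocity $T^L=T-\sin^2\alpha\,\partial_t=\cos\alpha\,(\cos\alpha\,\partial_t-\eta)$, of length $\sin\alpha\,|\cos\alpha|\neq0$; this is where $\cos\alpha\neq0$ is used. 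Its covariant derivative along the curve is $\bar\nabla_T T-T(\sin^2\alpha)\,\partial_t$; in the product this coincides with the $L^m(\varepsilon)$-acceleration of the projection. That vector lies in $\mathrm{span}\{T,\eta\}$ and is orthogonal to $\partial_t$, hence is a multiple of $N$, i.e.\ of $T^L$. So the projected curves are pregeodesics meeting every projected level set orthogonally.

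The synchronisation is best phrased as an ODE. Along each projected curve parametrised by arclength $s$, one has $dh/ds=\tan\alpha$. On a flow box this is a function of $h$ alone, because $\grad\cos\alpha=-\lambda T$ is parallel to $\grad h=T$. All curves start on $\{h=c_0\}$, so uniqueness gives $h=a(s)$ independent of $x$. Then $\pi\circ\phi(x,s)=C_\varepsilon(s)f_0(x)+S_\varepsilon(s)N(x)=f_s(x)$, as required.
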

Clearly, Proposition~\ref{Prop-CMC} provides strong motivation to study the case where $H$ is non-constant. We observe that if $\cos\alpha=0$ in an open subset, then the hypersurface is locally a cylinder and biconservativity implies that $\grad H$ is a principal direction as an immediate consequence of \eqref{BHP1}. Moreover, $M^m=M^{m-1}\times\R$, where $M^{m-1}$ is a biconservative hypersurface of $L^m(\varepsilon)$. Therefore, when $\cos\alpha\neq 0$, a natural geometric assumption is that $\grad H$ be a principal direction. Our analysis establishes a close relationship between Tojeiro’s hypersurfaces \eqref{eq:phi-tojeiro-family} and biconservative hypersurfaces with $\grad H$ as a principal direction (see also \cite{MR4992706,MR4939182}). More specifically, our first result in this context is:

\begin{theorem}\label{Th-nostro-1}
Let $\phi:M^{m}\hookrightarrow L^m(\varepsilon)\times \mathbb{R}$ be a proper biconservative hypersurface such that $\grad H$ is a principal direction.  
If $\cos\alpha\neq 0$ everywhere, then $M^{m}$ {is locally described as a Tojeiro hypersurface \eqref{eq:phi-tojeiro-family} with $f_0: M^{m-1} \rightarrow L^m(\varepsilon)$ isoparametric.}
\end{theorem}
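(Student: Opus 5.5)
Since $\grad H$ is a principal direction, the biconservativity condition \eqref{BHP1} reads
\[
A(\grad H) + \frac{m}{2} H \grad H = -(m-1)\varepsilon H\cos\alpha\, T .
\]
On the open dense set where $\grad H\neq 0$, write $A(\grad H)=\lambda \grad H$. Then
\[
\Big(\lambda+\frac m2 H\Big)\grad H=-(m-1)\varepsilon H\cos\alpha\, T.
\]
The right-hand side never vanishes where $H\neq 0$ and $T\neq 0$. We have $\cos\alpha\neq0$, and $H\neq0$ on an open dense set by properness. If $T=0$ on an open set, then $\partial_t=\pm\eta$ there, so the hypersurface is a slice $L^m\times\{t_0\}$. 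A slice is totally geodesic, contradicting properness.

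So on an open dense set, $T$ is parallel to $\grad H$, and $T$ is therefore a principal direction. If instead $\grad H=0$ on an open set, then $H$ is constant there, and Proposition~\ref{Prop-CMC} forces $\cos\alpha=0$, contradicting the hypothesis. Hence $\grad H\neq0$ on an open dense set, and by continuity $T$ is a principal direction where $T\neq0$. Theorem~\ref{teo:main-tojeiro} then applies and gives local Tojeiro form \eqref{eq:phi-tojeiro-family}: $\phi(x,s)=f_s(x)+a(s)\partial_t$ with $a'>0$.

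It remains to show that $f_0$ is isoparametric. I would compute the shape operator of the Tojeiro immersion, which is standard (Tojeiro gives it). Let $k_i$ be the principal curvatures of $f_0$. Then $f_s$ has principal curvatures
\[
k_i(s)=\frac{\varepsilon S_\varepsilon(s)+C_\varepsilon(s)k_i}{C_\varepsilon(s)-S_\varepsilon(s)k_i}
\]
(the usual parallel-surface formula). The principal curvatures of $\phi$ are
\[
\mu_i=\frac{a'}{\sqrt{1+a'^2}}\,k_i(s)\quad (i\le m-1),\qquad \mu_m=\frac{a''}{(1+a'^2)^{3/2}} \text{ in the $T$-direction}.
\]
Also, $\cos\alpha$ is a function of $s$ alone, since $\cos\alpha=\pm 1/\sqrt{1+a'^2}$, and $T$ is proportional to $\partial_s$. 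Because $T\parallel\grad H$, the function $H$ is constant along the leaves $M^{m-1}\times\{s\}$; that is, $H=H(s)$.

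Now
\[
mH=\frac{a'}{\sqrt{1+a'^2}}\sum_i k_i(s)+\mu_m(s).
\]
The prefactor $\frac{a'}{\sqrt{1+a'^2}}$ and $\mu_m$ depend only on $s$, and $a'>0$. Hence $\sum_i k_i(s)$, the mean curvature of $f_s$, depends only on $s$. So every parallel hypersurface $f_s$ has constant mean curvature on $M^{m-1}$, which is exactly the definition of $f_0$ being isoparametric.

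The main obstacle is the careful justification of the Tojeiro shape operator and of the density and continuity arguments. These ensure that $T$ is a principal direction everywhere, including at points where $\grad H=0$ or $H=0$, so that Theorem~\ref{teo:main-tojeiro} applies locally on all of $M$. Where needed, I would work on an open dense set and argue that the conclusion extends by analyticity or continuity of the local description.
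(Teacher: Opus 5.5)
Your proposal is correct and is essentially the paper's proof. Biconservativity with $\grad H$ principal forces $T$ to be parallel to $\grad H$, since otherwise $T=0$ on an open set and $M^m$ is a slice; hence $T$ is principal, Tojeiro's theorem gives the local form, $H$ depends only on $s$, and \eqref{eq:aHsH-2} with $\rho>0$ makes every $f_s$ of constant mean curvature.

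Your detour through Proposition~\ref{Prop-CMC} is unnecessary: at points with $H\neq0$ and $T\neq0$ the right-hand side is nonzero, which already forces $\grad H\neq0$. Also, $H\neq0$ on an open dense set does not follow from non-minimality alone---like the paper, you are really working on the open set where $H\neq0$, which suffices for a local statement.
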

Proper biconservative hypersurfaces as in Theorem~\ref{Th-nostro-1} do exist and provide {in all dimensions} examples which are  neither slices nor cylinders. Indeed, we have:
\begin{proposition}\label{Th-exist-loc-bic} Let $f_0: M^{m-1} \rightarrow L^m(\varepsilon)$ be any given isoparametric hypersurface. Then the associated Tojeiro family \eqref{eq:phi-tojeiro-family} contains an infinite subfamily of locally defined proper biconservative hypersurfaces whose angle function $\alpha$ is non-constant. 
\end{proposition}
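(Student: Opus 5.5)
The plan is to write down the geometry of a Tojeiro hypersurface \eqref{eq:phi-tojeiro-family} explicitly, observe that for isoparametric $f_0$ every relevant quantity depends only on $s$, and then reduce biconservativity to a single ODE for $u=a'$. Local solvability of that ODE, with freely chosen initial data, should then produce an infinite family.

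\emph{Step 1: frame and angle.} Let $N_s=\partial_s f_s$, which is the unit normal of $f_s$ in $L^m(\varepsilon)$. Then
$$\phi_s=N_s+a'(s)\,\partial_t,\qquad |\phi_s|^2=1+a'^2,$$
and the tangent vectors $d\phi(X)=df_s(X)$, $X\in TM^{m-1}$, are horizontal. A unit normal is therefore
$$\eta=\frac{-a'N_s+\partial_t}{\sqrt{1+a'^2}},\qquad \cos\alpha=\frac{1}{\sqrt{1+a'^2}}.$$
Consequently $\alpha=\alpha(s)$, and $T=\partial_t-\cos\alpha\,\eta$ is a multiple of $\phi_s$. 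Differentiating $\eta$ gives the principal curvatures. In the horizontal directions they are
$$\lambda_i=\frac{a'\,k_i(s)}{\sqrt{1+a'^2}},\qquad i=1,\dots,m-1,$$
up to a sign convention, where $k_i(s)$ are the principal curvatures of $f_s$. In the direction $T$ the principal curvature is
$$\lambda_T=\frac{a''}{(1+a'^2)^{3/2}}.$$
Since $f_0$ is isoparametric in a space form, Cartan's theorem says it has constant principal curvatures $k_i$. Its parallel hypersurfaces then have constant principal curvatures $k_i(s)$, given explicitly by the usual $\cot$/$\coth$-type formulas in $s$. Hence $H=H(s)$, and $\grad H$ is parallel to $T$, in accordance with Theorem~\ref{teo:main-tojeiro}.

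\emph{Step 2: the ODE.} Because $\grad H=\frac{H'(s)}{1+a'^2}\phi_s$ and $A(\grad H)=\lambda_T\grad H$, the tangential part \eqref{BHP1} reduces to the scalar equation
$$\bigl(2\lambda_T+mH\bigr)\frac{H'(s)}{\sqrt{1+a'^2}}+2(m-1)\varepsilon H\cos\alpha\,|T|=0,\qquad |T|=\frac{a'}{\sqrt{1+a'^2}}.$$
Here $H$ contains $u'=a''$, so $H'$ contains $u''$. The equation is therefore a second-order ODE in $u$ of the form
$$\mathcal{P}(s,u,u')\,u''=\mathcal{Q}(s,u,u'),\qquad \mathcal{P}=\frac{2\lambda_T+mH}{m(1+u^2)^{3/2}},$$
up to a harmless positive factor.

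\emph{Step 3: local existence.} Pick $s_0$ and initial values $u(s_0)=u_0>0$ and $u'(s_0)=u_1$ satisfying three conditions: $\mathcal{P}\neq0$, which is $2\lambda_T+mH\neq0$; $H\neq0$; and $u_1\neq0$. These are open conditions on $(u_0,u_1)$. They are nonempty: for generic $u_1$ both $2\lambda_T+mH$ and $H$ are nonzero affine functions of $u_1$. Picard--Lindelöf then gives a local solution $u$. Set $a=\int u$, which is increasing. The resulting $\phi$ is regular, since $\phi_s\neq0$ and $df_s$ is injective near $s_0$ for small $s$, away from focal points. It is biconservative, and it is proper because $H\neq0$ near $s_0$. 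Its angle $\alpha$ is non-constant because $(\cos\alpha)'=-uu'(1+u^2)^{-3/2}\neq0$. Different initial data give a two-parameter, in particular infinite, family of distinct solutions. I would also check that these hypersurfaces are not reparametrizations of one another, for instance by comparing $\alpha(s_0)$.

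The main obstacle is expected in Step~1: getting the signs and normalizations of $\lambda_T$ and $\lambda_i$ right, and confirming that the coefficient of $u''$ really is proportional to $2\lambda_T+mH$. If that coefficient instead turned out to vanish identically, the equation would degenerate to first order and the family would have to be produced from the lower-order equation. I would verify the computation first in the simplest case, a totally umbilical $f_0$, where the formulas are fully explicit.
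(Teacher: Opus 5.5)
Your proposal is correct and is essentially the paper's argument. The paper substitutes $mH=\rho'-(m-1)\rho H_s$ (with $\rho=a'/\sqrt{1+a'^2}$, so $\rho'=\lambda_T$) into the tangential equation, which gives a second-order ODE for $\rho$ with leading coefficient $(m-1)H_s\rho-3\rho'=-(2\lambda_T+mH)$; it then imposes exactly your three conditions ($\rho'(0)\neq 0$, nonvanishing leading coefficient, $H(0)\neq0$) before invoking local existence. Your version in $u=a'$ is the equivalent third-order ODE in $a$ that the paper mentions at the end of its proof, and the coefficient of $u''$ you were worried about is not identically zero.
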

In general, it is difficult to obtain a full qualitative description of the biconservative hypersurfaces constructed in Proposition~\ref{Th-exist-loc-bic}. However, the special case $m=2$ was studied in \cite{MR4939182}, where explicit local parameterizations were determined. 

{It was proved in \cite[Proposition~4.5]{MR2395125} that a totally umbilical, non-CMC hypersurface is biconservative if and only if $m=4$. We describe this type of solutions in more detail in Subsection~\ref{Sub-sec-bic}. However, it is then more interesting to investigate the existence and geometrical properties of \textit{non-totally umbilical} biconservative hypersurfaces. This issue will be treated in Subsection~\ref{Sub-sec-bic-non-umb}, where we investigate the existence of non-totally umbilical solutions and their rigidity. In particular, we shall first prove the following result:}

\begin{theorem}\label{Th-non-umbil-bic}
Let $m\ge2$. There exists a complete, non-cylindrical proper biconservative hypersurface
\[
 \phi:{M}^m
       \hookrightarrow \mathbb H^m\times\mathbb R
       \] 
such that 
\begin{itemize}
\item[{\rm (i)}]  $\phi$ is non-totally umbilical and if $m=4$ has constant scalar curvature ${\rm Scal}=-12$;
\item[{\rm (ii)}] The squared mean curvature function $H^2$ of $\phi$ is non-constant and bounded;
\item[{\rm (iii)}] The Ricci curvature of $\phi$ is bounded from below.
\end{itemize}
\end{theorem}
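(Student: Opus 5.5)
We would build the example inside the Tojeiro family \eqref{eq:phi-tojeiro-family} with $\varepsilon=-1$, taking $f_0:\R^{m-1}\to\h^m$ to be a \emph{horosphere}. This choice is consistent with Theorem~\ref{Th-nostro-1} and Proposition~\ref{Th-exist-loc-bic}, since a horosphere is isoparametric. Its parallel hypersurfaces $f_s$ are again horospheres with all principal curvatures equal to $1$. The induced metric of $f_s$ is $e^{2s}g_{\rm flat}$, with the sign of $s$ fixed by choosing $N$ appropriately.

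\textbf{Step 1: geometric data in terms of $a$.} For $\phi(x,s)=f_s(x)+a(s)\partial_t$ we have $|\partial_s f_s|=1$ and $\partial_s f_s\perp df_s$. Hence the induced metric is the warped product
\[
g=(1+a'^2)\,ds^2+e^{2s}g_{\rm flat}.
\]
Writing $a'=\tan\theta$ with $\theta=\theta(s)\in(-\pi/2,\pi/2)$, the unit normal is $\eta=\cos\theta\,\partial_t-\sin\theta\,\partial_s f_s$ up to sign. This gives $\cos\alpha=\cos\theta$, and $T$ is proportional to $\partial_s$. The principal curvatures are $\lambda_1=\kappa$ in the $T$ direction, where $\kappa$ is essentially $\theta'(s)\cos\theta$ (the curvature of the profile curve in the flat $(s,t)$-plane), and $\lambda_2=\dots=\lambda_m=\sin\theta$, coming from the horosphere curvature $1$ weighted by the normal component. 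Therefore
\[
mH=\kappa+(m-1)\sin\theta,
\]
and $H$, $\alpha$ and $\grad H\parallel T$ depend only on $s$.

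\textbf{Step 2: the biconservativity ODE.} By \eqref{BHP1}, $[\tau_2]^\top=0$ reduces to a single scalar equation along $T$:
\[
2\lambda_1 H' + mHH' - 2(m-1)H\cos\alpha\,\langle T,\cdot\rangle=0 .
\]
Here $'$ denotes the derivative along the unit vector in the $T$ direction. This is a second order ODE for $\theta$ as a function of arc length $u$ along the profile curve. We would rewrite it as a first order autonomous system in $(\theta,\kappa)$ and look for a first integral, which is typically available for biconservative equations. We would then select an orbit along which:
\begin{itemize}
\item $\theta$ stays strictly inside $(-\pi/2,\pi/2)$, so that $\cos\alpha\neq0$;
\item $\kappa$ stays bounded;
\item the solution is defined for all $u\in\R$;
\item $s(u)=\int\cos\theta\,du$ covers all of $\R$.
\end{itemize}
For instance, $\theta$ could tend monotonically to limits $\pm\theta_\infty$ with $\kappa\to0$. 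This global analysis of the ODE (existence for all time, boundedness, and nonconstancy of $H$) is the main obstacle. We would first try a phase-plane argument using the first integral together with an energy or monotonicity estimate.

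\textbf{Step 3: verifying the properties.} Once a global orbit is in hand, the remaining properties follow as below.
\begin{itemize}
\item \emph{Completeness.} The metric is $du^2+e^{2s(u)}g_{\rm flat}$ on $\R\times\R^{m-1}$ with $u$ ranging over $\R$. A warped product over a complete base with complete fibres is complete.
\item \emph{Non-cylindrical and non-constant $H$.} These follow from $\theta$ being nonconstant; nonconstancy of $H$ also follows from Proposition~\ref{Prop-CMC}.
\item \emph{Non-totally umbilical.} This holds because $\kappa\neq\sin\theta$ on an open set. This is automatic when $m\ne4$ by \cite{MR2395125}, and for $m=4$ we choose the orbit off the umbilical one.
\item \emph{Constant scalar curvature for $m=4$.} We compute $\Scal$ by the Gauss equation in $\h^m\times\R$, namely
\[
\Scal = -(m-1)(m-2\sin^2\alpha)+m^2H^2-|A|^2 ,
\]
with the product term written via $\sin\alpha$. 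We then check, using the first integral when $m=4$, that it equals $-12$.
\item \emph{Bounded $H^2$ and Ricci curvature.} Boundedness of $\kappa$ and $\sin\theta$ gives bounded $A$, hence bounded $H^2$. Since the ambient curvature is bounded, the Gauss equation then bounds the Ricci curvature from below.
\end{itemize}
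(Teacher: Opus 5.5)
There is a genuine gap. You work in the same family as the paper (Tojeiro hypersurfaces over parallel horospheres), but the existence argument is missing entirely, and the orbit you plan to select in Step~2 does not exist. In the paper's normalization the principal curvatures are $-\rho$ (multiplicity $m-1$) and $\rho'$, and $[\tau_2]^\top=0$ becomes
\[
(\rho''-(m-1)\rho')\,(3\rho'-(m-1)\rho)=2(m-1)\rho\,(\rho'-(m-1)\rho).
\]
This equation is autonomous in $s$ and homogeneous of degree two in $\rho$. Hence $w=\rho'/\rho$ satisfies a first-order autonomous equation. Its equilibria are $w=m-1$ (the minimal case) and the roots of $3w^2-(m-1)w-2(m-1)=0$. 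It is singular at $3w=m-1$: there the coefficient of $H'$ vanishes and $H'$ blows up.

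The phase line shows that every non-zero solution, in at least one direction, reaches at finite arc length either $|\rho|=1$ (a point with $\cos\alpha=0$) or this singular set. In particular, your model scenario of an asymptotically straight profile with $\kappa\to0$ and $\theta\to\pm\theta_\infty\neq0$ is impossible, since $\rho'=0$ forces $\rho''=2(m-1)\rho\neq0$. So no complete example in this family keeps $\theta\in(-\pi/2,\pi/2)$. Completeness has to come from continuing the hypersurface through a vertical tangent, which your selection criteria explicitly exclude.

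Your Step~3 claim for $m=4$ also fails. Along this family
\[
\Scal=-m(m-1)+(m-1)\rho\,(m\rho-2\rho').
\]
Comparing $\frac{d}{ds}\bigl[\rho^2(m-2w)\bigr]$ with the $w$-equation shows that this is constant only on the equilibrium $w=m/2$. For $m=4$ that means $\rho=ce^{2s}$ (compare Theorem~\ref{thm:rigidity}); no first integral gives a general orbit $\Scal\equiv-12$.

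The missing idea is the paper's construction:
\begin{enumerate}
\item The scaling invariance suggests the ansatz $\rho=e^{\beta s}$, which reduces biconservativity to $3\beta^2-(m-1)\beta-2(m-1)=0$.
\item One then passes through the vertical tangent at $\rho=1$ by taking the height as parameter, $\rho=\sin(\lambda t)$ with $\lambda=|\beta|$. This gives the immersion $(x,\sin(\lambda t)^{1/\beta},t)$ on $\R^{m-1}\times(0,\pi/\lambda)$.
\item With $r=\frac1\lambda\log\tan\frac{\lambda t}{2}$ the induced metric is the complete metric $dr^2+\cosh(\lambda r)^{2/\beta}g_{\R^{m-1}}$.
\end{enumerate}
Boundedness of $H$, the Ricci lower bound, and $\Scal=-m(m-1)+(m-1)(m-2\beta)\sech^2(\lambda r)$ then follow by direct computation.
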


The biharmonic equation is commonly regarded as a non-flexible system and so it is not surprising that in the literature several rigidity results for biharmonic submanifolds are available (see, for example,  \cite{MR4410183} and \cite[Chapter~7]{MR4265170}).
In the course of the proof of Theorem~\ref{Th-non-umbil-bic} we shall carry out  the explicit construction of the hypersurfaces $\phi$ having the stated properties. These hypersurfaces locally are generated by horospheres through the Tojeiro construction. Moreover, they have two distinct principal curvatures and $\grad H$ as principal direction.  We shall prove that these two properties, under an extra curvature condition, give the following rigidity result. 

\begin{theorem}\label{thm:rigidity}
Let $ \phi:M^4\hookrightarrow \h^4\times\R $ be a proper biconservative hypersurface with constant scalar curvature and at most two different principal curvatures. Assume that $\cos\alpha\neq 0$ everywhere. If $\grad H\neq0$ is a principal direction then, locally, $\phi$ {is congruent to} the hypersurface  of Theorem~\ref{Th-non-umbil-bic}~{\rm (i)}. 
\end{theorem}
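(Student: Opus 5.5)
The plan is to reduce everything to the Tojeiro family and then to an overdetermined system of ODEs in one variable. Since $\grad H\neq 0$ is a principal direction and $\cos\alpha\neq0$ everywhere, Theorem~\ref{Th-nostro-1} applies. Locally $\phi$ is then a Tojeiro hypersurface $\phi(x,s)=f_s(x)+a(s)\partial_t$, with $f_0:M^3\to\h^4$ isoparametric.

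First I will compute the shape operator of \eqref{eq:phi-tojeiro-family} explicitly. The vector $T$ is proportional to $\partial_s$, and it is a principal direction (Theorem~\ref{teo:main-tojeiro}) with principal curvature $\lambda(s)$, which involves $a''$. The directions tangent to $M^3$ are principal with curvatures $\lambda_i(s)=\kappa_i(s)\,a'/\sqrt{1+a'^2}$ up to sign. Here $\kappa_i(s)$ are the (constant in $x$) principal curvatures of the parallel hypersurface $f_s$, and $a'/\sqrt{1+a'^2}$ is essentially $\cos\alpha$. Hence $H$, $\alpha$ and $\Scal$ depend only on $s$, and $\grad H$ is parallel to $T$. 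The tangential part \eqref{BHP1} then reduces to the scalar equation
\[
2\lambda H' + 4HH' + 6\varepsilon H\cos\alpha\,|T|\,\tfrac{d}{ds}(\cdot)\text{-normalization}=0,
\]
that is, the biconservativity ODE $A(\grad H)=-2H\grad H+3H\cos\alpha\,T$ (with $\varepsilon=-1$), which is a second order ODE for $a(s)$.

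Next I will split according to the hypothesis of at most two principal curvatures. Isoparametric hypersurfaces in $\h^4$ have at most two distinct principal curvatures. Case A: $f_0$ is totally umbilical, i.e. a geodesic sphere, a horosphere, or an equidistant hypersurface (possibly totally geodesic), with $\kappa(s)=\coth s$, $1$, $\tanh s$ or $0$. In Case A the principal curvatures are $\lambda$ and $\lambda_1$ (the latter with multiplicity three), and the possibility $\lambda=\lambda_1$ is allowed. Case B: $f_0$ is a tube around a totally geodesic $\h^k$, $k=1,2$. Then $\lambda_1\neq\lambda_2$, so the hypothesis forces $\lambda$ to equal one of them.

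In each case the Gauss equation gives $\Scal$ as an explicit function of $s$ through $a',a''$ and $\kappa_i(s)$. Imposing $\Scal=$ const yields a second ODE, so the pair (biconservativity, constant $\Scal$) is overdetermined for the single function $a$. I will differentiate the constant-scalar-curvature relation, use the biconservativity ODE to eliminate $a''$ and $a'''$, and obtain a polynomial identity in $a'$ whose coefficients are functions of $s$ through $\kappa_i(s)$.

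For the geodesic-sphere, equidistant and tube cases, the $s$-dependence of $\coth s$, $\tanh s$ should be incompatible with this identity holding on an interval, unless $H$ is constant or $\cos\alpha=0$; both are excluded by Proposition~\ref{Prop-CMC} and the hypotheses. The totally umbilical subcase $\lambda=\lambda_1$ (cf.\ \cite[Proposition~4.5]{MR2395125}) I expect to be killed in the same way by constancy of $\Scal$. For the horosphere, $\kappa\equiv1$ makes the system autonomous. Its solutions should reproduce precisely the construction of Theorem~\ref{Th-non-umbil-bic}~(i) with $\Scal=-12$, and uniqueness of ODE solutions then gives congruence up to isometries of $\h^4\times\R$ and a shift in $s$.

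The main obstacle is the elimination step. The polynomial identities are messy, especially in Case B where one must track which of $\lambda_1,\lambda_2$ equals $\lambda$. I would first try evaluating the identity's leading coefficients in $a'$: requiring them to vanish identically should force $\kappa'=0$, which singles out the horosphere.
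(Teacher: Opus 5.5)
\textbf{Verdict: genuine gap.} Your route differs from the paper's and is workable, but its decisive step, the non-horosphere part of Case A, is not carried out, and the closing argument you propose for it is invalid.

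The setup is sound. Theorem~\ref{Th-nostro-1} puts $\phi$ locally in the Tojeiro family with $f_0$ isoparametric, and Cartan's classification in $\h^4$ gives your Cases A and B. In Case B the two-curvature hypothesis forces $\rho'=-\rho\kappa_j$. This fixes $\rho$ up to a constant and reduces biconservativity to an explicit check. There are two slips. First, $\rho=a'/\sqrt{1+a'^2}$ equals $\sin\alpha=|T|$, not $\cos\alpha$. Second, biconservativity is second order in $\rho$, hence third order in $a$.

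The gap is Case A with $\kappa(s)$ non-constant (geodesic spheres and equidistants), which you leave as ``should be incompatible''. Eliminating $a''$ and $a'''$ gives $\sum_j c_j(s)\,a'(s)^j=0$ along \emph{one} fixed solution. Since $a'$ is not a free variable, nothing forces the $c_j$ to vanish identically. The identity only says that $a'(s)$ is an algebraic function of $s$. It also still contains the unknown constant value of $\Scal$.

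To close the gap, you must eliminate both the unknown function and that constant, ending with a \emph{nonzero} polynomial in $\kappa$ alone. This is what the paper does, intrinsically through Gauss--Codazzi rather than the classification, with $x=\lambda/|T|$ and $y=\mu/|T|$. In your picture these are $x=-\kappa(s)$ and $y=\rho'/\rho$.
\begin{itemize}
\item Write $\Scal=-12+\rho^2\mathcal F_r(x,y)$; differentiating once removes the constant.
\item Eliminating $\rho''$ using biconservativity gives $\mathcal P_r(x,y)=0$.
\item One more differentiation and elimination gives $\mathcal R_r(x)=0$ with $\mathcal R_r\not\equiv 0$.
\end{itemize}
So $\kappa$ is constant, and $\kappa'=\kappa^2-1$ then forces $\kappa^2=1$, i.e.\ the horosphere. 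The umbilical subcase is faster: $\grad H=u\,T$ and $\grad u=-H\,T$ make $H^2+u^2$ constant, and $\Scal=-6(1+u^2)+12H^2$ then makes $H$ constant.

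The horosphere case also needs an argument; ``uniqueness of ODE solutions'' does not suffice, because biconservativity alone has a two-parameter family of solutions there. Use that $\Scal$ is constant. The equation $12\rho^2-6\rho\rho'=\Scal+12$ integrates to $\rho^2=Ce^{4s}+K$. Substitute this into $(\rho''-3\rho')(\rho'-\rho)=2\rho(\rho'-3\rho)$ and put $D=Ce^{4s}$; the result is $K(7D^2+6KD+3K^2)=0$. Hence $K=0$, so $\rho=ce^{2s}$ and $\Scal=-12$.

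With these pieces filled in, your route (Cartan's classification plus case-by-case checks) is a valid alternative to the paper's uniform resultant computation over the multiplicities $r=1,2,3$.
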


\vspace{3mm}

Finally, we use the Omori-Yau maximum principle to prove non-existence of certain  complete proper biharmonic hypersurfaces. More precisely, we have:
\begin{theorem}\label{Th-Max-Principle-application} 
Let $\phi:M^m \hookrightarrow \h^m \times \R$ be a complete, biharmonic hypersurface such that the function $H^2$ is bounded. If ${\rm Ric}(M)$ is bounded from below, then the hypersurface is minimal.
\end{theorem}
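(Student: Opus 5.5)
We argue by contradiction and apply the Omori--Yau maximum principle to $H^2$. We may assume $H$ is not identically zero. Since $\varepsilon=-1$, the normal component \eqref{BHP2} of the bitension field gives the biharmonic equation
\[
\Delta H = -H|A|^2 - (m-1)H\sin^2\alpha .
\]
Using $\tfrac12\Delta H^2 = H\Delta H + |\grad H|^2$, we obtain
\[
\tfrac12\Delta H^2 = -H^2|A|^2 - (m-1)H^2\sin^2\alpha + |\grad H|^2 .
\]
The key point is that the last term should not spoil the sign. To handle it, we use the tangential equation \eqref{BHP1}, namely
\[
2A(\grad H) = -mH\grad H + 2(m-1)H\cos\alpha\, T .
\]
We will use it to control $|\grad H|^2$, or alternatively we will work with a different function, such as $u=H^2$ composed with a suitable function, or $\Delta H$ directly. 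A cleaner route is to apply Omori--Yau to $H$ itself, or to $-H$ if needed. Completeness together with $\Ric$ bounded from below gives Omori--Yau for the Laplacian: for any bounded-above $u$ there is a sequence $x_k$ with
\[
u(x_k)\to\sup u, \qquad |\grad u|(x_k)<1/k, \qquad \Delta u(x_k)<1/k .
\]

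The plan is the following. Since $H^2$ is bounded, $H$ is bounded. Possibly after changing orientation we may assume $\sup H>0$, and we apply Omori--Yau to $u=H$. Along the sequence,
\[
\Delta H(x_k) = -H(x_k)\bigl(|A|^2+(m-1)\sin^2\alpha\bigr)(x_k) < 1/k .
\]
We then use $|A|^2\ge mH^2$ (with the normalisation $H=\tfrac1m\operatorname{trace}A$ implicit in \eqref{BHP2}). This gives
\[
H(x_k)\,m H(x_k)^2 \le H(x_k)\bigl(|A|^2+(m-1)\sin^2\alpha\bigr)(x_k) < 1/k \quad\text{for large } k,
\]
since $H(x_k)\to \sup H>0$. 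Letting $k\to\infty$ yields $m(\sup H)^3\le 0$, which is a contradiction. Hence $\sup H\le 0$. Applying the same argument to $-H$, which satisfies the same equation because the equation is odd in $H$, gives $\inf H\ge 0$. Therefore $H\equiv 0$.

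This is the whole argument: the favourable sign of $-\varepsilon(m-1)H\sin^2\alpha$ when $\varepsilon=-1$ is exactly what is needed. Notice that the tangential part \eqref{BHP1} is not needed at all.

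The main obstacle is making sure the Omori--Yau principle applies in the form required. Completeness and $\Ric$ bounded below give the Laplacian version (Yau/Cheng--Yau) for $C^2$ functions that are bounded above, and $H$ is smooth and bounded because $H^2$ is. The remaining checks are the sign conventions: $\Delta$ must be the analyst's Laplacian in the Omori--Yau statement, consistent with the sign of $\Delta H$ in \eqref{BHP2}. If the paper's $\Delta$ is the geometer's (nonnegative) Laplacian, all inequalities are reversed consistently and the argument goes through with $\Delta u(x_k)>-1/k$ instead.
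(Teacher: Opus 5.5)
Your argument is correct in substance and is essentially the paper's. Both apply Omori--Yau to the normal equation \eqref{BHP2}, use that for $\varepsilon=-1$ the coefficient $|A|^2+(m-1)\sin^2\alpha$ is nonnegative, and finish with $|A|^2\ge cH^2$. The only difference is the test function. The paper applies the principle once to $u=H^2$, obtaining $|A|^2(p_k)\to 0$ and then using $(m-1)H^2\le |A|^2$. You apply it to $H$ and to $-H$, and get $m(\sup H)^3\le 0$ directly.

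You should, however, straighten out the sign conventions, because two of your steps mix them.

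\textbf{The $H^2$ route.} The paper's Laplacian is $\Delta=-\Div\grad$, so the product rule is $\tfrac12\Delta H^2=H\Delta H-|\grad H|^2$. Hence
\[
\tfrac12\Delta H^2=-\bigl(|A|^2+(m-1)\sin^2\alpha\bigr)H^2-|\grad H|^2\le 0 .
\]
The gradient term has the favourable sign. The ``obstacle'' you saw in the $H^2$ route comes only from combining the analyst's product rule with the geometer's equation, and that route is exactly the paper's proof.

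\textbf{The inequality chain.} In this convention Omori--Yau gives $\Delta u(x_k)>-1/k$. Your display
\[
\Delta H(x_k)=-H(x_k)\bigl(|A|^2+(m-1)\sin^2\alpha\bigr)(x_k)<1/k
\]
pairs the analyst's form of Omori--Yau with the geometer's form of \eqref{BHP2}. It is vacuous once $H(x_k)>0$, and your next inequality, $H(x_k)\bigl(|A|^2+(m-1)\sin^2\alpha\bigr)(x_k)<1/k$, does not follow from it. With the consistent statement $\Delta H(x_k)>-1/k$, which you anticipate in your last paragraph, that inequality is exactly what one obtains, and the rest of your argument goes through unchanged.
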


\begin{remark}\label{rem:comparison-theorem-1.11}
Theorem~\ref{Th-non-umbil-bic} supplies complete proper biconservative hypersurfaces in $\mathbb H^m\times\mathbb R$
with $H^2$ bounded and Ricci curvature bounded from below.
Thus the global assumptions in Theorem~\ref{Th-Max-Principle-application} do not force minimality in the biconservative setting.
\end{remark}

Throughout the paper, we adopt the sign conventions
\[
R(X,Y)Z=\nabla_X\nabla_YZ-\nabla_Y\nabla_XZ-\nabla_{[X,Y]}Z
\]
for the Riemann curvature tensor and  $\Delta u =-\Div(\grad u)$ for the Laplace operator.

\section{Proofs of the results}
As a preliminary work, we recall here some basic facts concerning the Tojeiro hypersurfaces \eqref{eq:phi-tojeiro-family} (see \cite{MR2738910}).

Let
\begin{equation}\label{eq:rho-def}
\rho(s)=\frac{a'(s)}{\sqrt{1+{a'}^2(s)}}.
\end{equation}
The principal curvatures of $\phi$ are 
\begin{equation}\label{eq-curv-prin}
\big[k^{\phi}_1(s),\ldots,k^{\phi}_{m-1}(s),k^{\phi}_m(s)\big]=\big[-\rho(s) \kappa_1(s),\ldots,-\rho(s) \kappa_{m-1}(s),\rho'(s)\big]
\end{equation} 
where $ \kappa_1(s),\ldots, \kappa_{m-1}(s)$ are the principal curvatures of $f_s$, while $\rho'(s)$ represents the principal curvature with respect to the principal direction $\partial_s$. {Note that, unless $f_0:M^{m-1}\hookrightarrow L^m(\varepsilon)$ is isoparametric, the principal curvatures in \eqref{eq-curv-prin} depend also on $x$. Next, we observe that the mean curvature function $H$ of $\phi$ satisfies 
\begin{equation}\label{eq:aHsH-2}
m \,H=-(m-1)\rho(s) H_s+\rho'(s) \,,
\end{equation}
where $H_s$ is the mean curvature function of $f_s$ and 
\begin{equation*}\label{eq:aHsA2}
|A|^2=\rho^2(s) |A_s|^2+{\rho'}^2(s)\,,
\end{equation*}
where $|A_s|^2=\kappa^2_1(s)+\cdots +\kappa^2_{m-1}(s)$.} Moreover, (see \cite{MR2738910})
{
\begin{equation}\label{eq-rho=sin2alpha}
{\rm (i)}\,T=\frac{a'(s)}{1+{a'(s)}^2}\,\partial_s = \rho \,\cos \alpha \,\partial_s \qquad  {\rm (ii)} \,|T|^2=\rho^2(s)=\sin^2 \alpha\,.
\end{equation}
If we assume that the initial hypersurface $f_0$ is isoparametric, then $H_s$ depends only on $s$ and, taking into account \eqref{eq-rho=sin2alpha}(ii),} a routine computation shows that the biharmonicity condition stated in Proposition~\ref{Prop-Ou} is equivalent to the following system:
\begin{equation}\label{eq-biharm-system-2}
\begin{cases}
H+\frac{m-1}{m} \rho H_s-\frac{1}{m} \rho^{\prime} =0 \\
H^{\prime} \rho^{\prime}+\frac{m}{2} H H^{\prime}+(m-1) \varepsilon H \rho =0\,\,\quad {\rm Tangent}\\
-\Delta H-  H (\rho^2 |A_s|^2+{\rho'}^2) + \varepsilon  (m-1) H \rho^2=0\,\,\quad {\rm Normal}\,.
\end{cases}
\end{equation}
Now, we begin the proofs of the results stated in the introduction.
\begin{proof}[Proof of Theorem~\ref{Th-constant-angle}]
First, let $\cos\alpha \equiv 0$. Then, locally,  $M^m$ is a cylinder $M^{m-1}\times\R$, where $M^{m-1}$ is a hypersurface of $L^m(\varepsilon)$. Moreover, the principal curvatures of $M^m$ are the $m-1$ principal curvatures $\kappa_1,\ldots\kappa_{m-1}$ of $M^{m-1}$ in $L^m(\varepsilon)$ and $\kappa_m=0$. Then 
\[
\begin{split}
H=&\frac{m-1}{m} H_{M^{m-1}}\,,\quad |A|^2=|A_{M^{m-1}}|^2\,, \\
\grad H=&\frac{m-1}{m}\grad_{M^{m-1}}(H_{M^{m-1}})\,.
\end{split}
\] 
Inspection of \eqref{BHP1} and \eqref{BHP2} shows that $M^m$ is biconservative (biharmonic) in $L^m(\varepsilon)\times\R$ if and only if $M^{m-1}$ is biconservative (biharmonic) in $L^m(\varepsilon)$.

Therefore, it only remains to prove that no proper biconservative hypersurface can have
$\cos\alpha$ equal to a \textit{nonzero} constant. 
First, we recall that, 
using the fact that $\partial_t$ is parallel on $L^m(\varepsilon)\times \mathbb R$, a direct computation gives
\begin{eqnarray}
\nabla_XT&=&\cos\alpha \,A(X),\label{eq:nablaXT}\\
X(\cos\alpha)&=&-\langle A(X),T\rangle=- \langle X,A (T)\rangle \label{eq:XTheta}
\end{eqnarray}
for every tangent vector field $X$ on $M^m$. It follows from \eqref{eq:XTheta} that $A(T)=0$.
If $T=0$, then $|\cos\alpha|=1$ and $M^m$ is an open part of a
totally geodesic slice, hence it is not proper. Thus it remains to
exclude the case $0<|\cos\alpha|<1$.
Since $T$ is a principal direction and $\cos\alpha\neq 0$, by
Theorem~\ref{teo:main-tojeiro} the hypersurface is locally a Tojeiro hypersurface
\[
\phi(x,s)=\bigl(f_s(x),a(s)\bigr),
\]
where $f_s$ is the family of parallel hypersurfaces of an oriented
hypersurface $f_0:M^{m-1}\longrightarrow L^m(\varepsilon)$.
Since $\alpha$ is constant and we are studying the case $0<|\cos\alpha|<1$, \eqref{eq-rho=sin2alpha} implies that $\rho$ is a nonzero
constant. Thus \eqref{eq:aHsH-2} gives
\begin{equation}
mH(x,s)=-(m-1)\rho H_s(x,s),
\label{eq:constant-angle-H}
\end{equation}
where $H_s$ denotes the mean curvature of the parallel hypersurface
$f_s$. Notice that, at this stage, $H_s$ is allowed to depend on both
$x$ and $s$.

The biconservativity equation is
\[
2A(\grad H)+mH\grad H
+2(m-1)\varepsilon H\cos\alpha\,T=0.
\]
Taking the scalar product with $T$ and using $A(T)=0$, we obtain
\begin{equation}\label{component-T-biconservative}
mH\,T(H)
+2(m-1)\varepsilon H\cos\alpha\,|T|^2=0.
\end{equation}
Since the hypersurface is non-minimal, there is a nonempty open set on which
$H\neq 0$. On this set,
\[
m\,T(H)+2(m-1)\varepsilon\cos\alpha\,|T|^2=0.
\]
{Now, using \eqref{eq-rho=sin2alpha}, we easily find that condition \eqref{component-T-biconservative} becomes}
\begin{equation}\label{deriv-H-wrt-s1}
\partial_s H
=-\frac{2(m-1)}{m}\varepsilon\rho.
\end{equation}
Differentiating \eqref{eq:constant-angle-H} with respect to $s$ and
replacing in \eqref{deriv-H-wrt-s1}, we deduce
\begin{equation}
{\partial_s H_s=2\varepsilon.}
\label{eq:Hs-derivative}
\end{equation}

We now derive a contradiction using the geometry of parallel
hypersurfaces in a space form. {Fix $x\in M^{m-1}$ and let
$\kappa_1^0,\ldots,\kappa_{m-1}^0$ be the principal curvatures of
$f_0$ at $x$. On any interval free of focal points, the corresponding
principal curvatures of $f_s$ are
\[
\kappa_i(s)=
\frac{
\kappa_i^0 C_\varepsilon(s)+\varepsilon S_\varepsilon(s)
}{
C_\varepsilon(s)-\kappa_i^0 S_\varepsilon(s)
},
\qquad i=1,\ldots,m-1.
\]
}
A direct computation gives
\begin{equation}\label{fund-dif-eqn-ki}
\kappa_i'=
\varepsilon+\kappa_i^2.
\end{equation}
Since
\[
H_s(x,s)=\frac{1}{m-1}
\sum_{i=1}^{m-1}\kappa_i(s),
\]
it follows that
\[
\partial_s H_s
=
\varepsilon+
\frac{1}{m-1}\sum_{i=1}^{m-1}\kappa_i^2.
\]
Combining this identity with \eqref{eq:Hs-derivative}, we obtain
\begin{equation}
\frac{1}{m-1}\sum_{i=1}^{m-1}\kappa_i^2=\varepsilon.
\label{eq:kappa-square-identity}
\end{equation}

If $\varepsilon=-1$, equation~\eqref{eq:kappa-square-identity} is clearly
impossible. If $\varepsilon=1$, then
\[
Q(s):=\frac{1}{m-1}\sum_{i=1}^{m-1}\kappa_i(s)^2\equiv 1\,.
\]
On the other hand, using \eqref{fund-dif-eqn-ki},
\[
Q''
=
\frac{2}{m-1}
\sum_{i=1}^{m-1}
(1+\kappa_i^2)(1+3\kappa_i^2)>0
\]
which contradicts $Q\equiv 1$.

{Then the case $0<|\cos\alpha|<1$ cannot occur and so the proof is completed.}
\end{proof}
\begin{proof}[Proof of Proposition~\ref{Prop-CMC}] (i) From \eqref{BHP1} we see that a non-minimal CMC  hypersurface $M^m$ in $L^m(\varepsilon)\times\R$ is  biconservative if and only if
\[
\cos\alpha\, T=0\,.
\]
As in the proof of Theorem~\ref{Th-constant-angle}, if $\cos\alpha=0$ the hypersurface is a cylinder $M^{m-1}\times\R$, where $M^{m-1}$ is a non-minimal CMC hypersurface of $L^m(\varepsilon)$. The case $T=0$ is not admissible because otherwise the hypersurface would be a totally geodesic slice. The proof of statement (ii) is analogous.
\end{proof}
\begin{proof}[Proof of Corollary~\ref{Cor-1}]
This corollary is an immediate consequence of Theorem~\ref{Th-constant-angle} and the classification of proper biharmonic hypersurfaces in $L^m(\varepsilon)$, $\varepsilon \pm 1,\,m=2,3$. 
\end{proof}
\begin{proof}[Proof of Corollary~\ref{Cor-2}]
It was proved in \cite{arXiv:2411.11506} that any isoparametric hypersurface in $L^m(\varepsilon)\times\R$ has constant principal curvatures and constant angle $\alpha$. Then the conclusion follows immediately from Theorem~\ref{Th-constant-angle}.
\end{proof}

\begin{proof}[Proof of Theorem~\ref{Th-nostro-1}]
By assumption, $\cos\alpha\neq 0$ and $\phi:M^{m}\hookrightarrow L^m(\varepsilon)\times \mathbb{R}$ is a proper biconservative hypersurface such that $\grad H$ is a principal direction. The case that
\[
-2 m A( \grad H)-m^2 H \grad H=0
\]
in an open set is not acceptable, otherwise \eqref{BHP1} implies $T=0$ and so the hypersurface would be a totally geodesic slice.

Then we deduce from \eqref{BHP1} that $T$ is proportional to $\grad H$ and so is a principal direction. Therefore, {locally} the hypersurface is in the Tojeiro family \eqref{eq:phi-tojeiro-family}. Thus $\grad H$ is proportional to $\partial_s$ and we deduce that $H$ depends only on $s$. Therefore, \eqref{eq:aHsH-2} implies that $H_s$ depends only on $s$ meaning that all the parallel hypersurfaces $f_s$ have constant mean curvature. Consequently,  $f=f_0$ is isoparametric and the proof is completed. 
\end{proof}
\begin{proof}[Proof of Proposition~\ref{Th-exist-loc-bic}] From the first equation of \eqref{eq-biharm-system-2} we have
\[
H=-\,\frac{m-1}{m} \rho H_s+\frac{1}{m} \rho^{\prime}\,.
\]
Inserting this into the second equation of \eqref{eq-biharm-system-2} we deduce that the condition of biconservativity is equivalent to the following second order ODE for the function $\rho(s)$:
\begin{small}
\begin{equation}\label{eq-bic-rho-ODE}
\rho''=\frac{(m-1) \left(\rho  \rho ' \left(-3 H_s'+(m-1)H_s^2+2 \epsilon \right)-\left((m-1)H_s \rho ^2 \left(2 \varepsilon -H_s'\right)\right)-3 H_s \rho '^2\right)}{(m-1) H_s \rho-3 \rho '}
\end{equation}
\end{small}
Now, we can choose initial conditions
\begin{equation*}\label{eq-init-conditions}
\rho(0)= \rho_0 \quad (0< \rho_0 < 1)\,, \quad \rho'(0)= \rho_1\neq 0
\end{equation*}
in such a way that 
\begin{equation}\label{eq-cond-ex-uniqueness-th-ODE}
(m-1) H_s(0) \rho_0-3 \rho_1 \neq 0\quad{\rm and}\quad (m-1) H_s(0) \rho_0- \rho_1 \neq 0\,.
\end{equation}
With any such choice, the standard existence and uniqueness theorem for the Cauchy problem guarantees that the ODE \eqref{eq-bic-rho-ODE} admits a local solution defined on an open neighbourhood of $s=0$. Of course, there are infinitely many possible choices for which \eqref{eq-cond-ex-uniqueness-th-ODE} is satisfied. Moreover, since $\rho(s)$ is non-constant, the angle function $\alpha$ is also non-constant and so the proof is ended. 

For the sake of completeness, we observe that, using \eqref{eq:rho-def}, we can rewrite the ODE \eqref{eq-bic-rho-ODE} as a third order ODE for the function $a(s)$. In this case, conditions \eqref{eq-cond-ex-uniqueness-th-ODE} become
\begin{align*}
(m-1)H_s(0)\left (a'^3(0) +a'(0) \right )-3 a''(0)\neq 0\quad {\rm and}\\
(m-1)H_s(0)\left (a'^3(0) +a'(0) \right )-a''(0)\neq 0 \,.
\end{align*}
\end{proof}
We point out that given initial conditions satisfying \eqref{eq-cond-ex-uniqueness-th-ODE}, one can plot a numerical approximation of the corresponding biconservative solution of \eqref{eq-bic-rho-ODE} using a suitable software, although a complete qualitative analysis of these solutions appears to be difficult. Next, we explore in greater detail two important cases.
\subsection{Totally umbilical solutions}\label{Sub-sec-bic} 
{As we pointed out in the introduction, a totally umbilical, non-CMC hypersurface is biconservative if and only if $m=4$. A thorough study of totally umbilical hypersurfaces in $L(\varepsilon) \times \R$ has been carried out in \cite{MR4493653}. Some basic examples can be fitted into the Tojeiro construction as follows and for the sake of completeness we report here the details which are more relevant for our discussion.}
 
{First, let $\varepsilon=1$ and $f_0: M^{m-1} \rightarrow \mathbb{S}^m$ be the equator. In this case we have
\[
 H_s=\tan s \quad {\rm and} \quad |A_s|^2=(m-1) \tan^2 s \,.
\] 
Therefore, system \eqref{eq-biharm-system-2} becomes 
 \begin{equation}\label{eq-system-con-soluzione}
\begin{cases}
H+\frac{m-1}{m} \rho\tan s-\frac{1}{m} \rho^{\prime} =0 \\
H^{\prime} \rho^{\prime}+\frac{m}{2} H H^{\prime}+(m-1)  H \rho =0\quad {\rm Tangent}\\
-\Delta H-  H \big[(m-1) \rho^2 (-1+\tan^2 s)+{\rho'}^2 \big]=0\quad {\rm Normal}\,.
\end{cases}
\end{equation}
Using \eqref{eq-curv-prin} we find that the principal curvatures are
\[
k_1^\phi=\dots=k_{m-1}^\phi=-\,\rho\, \tan s\, \qquad k_m^\phi=\rho' 
\]
and so the hypersurface is totally umbilical if and only if
\[
-\,\rho\, \tan s=\rho'
\]
which implies
\begin{equation*}\label{eq-rho-explicit}
\rho(s)= c \cos s \,,
\end{equation*}
where $0<c\leq 1$ is a constant. Direct substitution in \eqref{eq-system-con-soluzione} gives
\begin{equation*}\label{eq-system-con-rho-data}
\begin{cases}
H(s)=-c \sin s \\
-\frac{1}{4}\, c^2 (m-4) \sin (2 s)=0\quad {\rm Tangent}\\
-\frac{1}{2} \,c\, m \sin s \left(3 c^2 \cos (2 s)+c^2-2\right)=0\quad {\rm Normal}
\end{cases}
\end{equation*}
which confirms biconservativity if and only if $m=4$.}
{Therefore, we have proper biconservative hypersurfaces 
\[
\phi: M^4=\s^{3} \times I \subset \R^4 \times I \rightarrow \s^4 \times \mathbb{R} \subset \R^5 \times I
\]
given by
$$
\phi(x, s)=\big ( \cos s \, x, \sin s, a(s) \big )
$$
and the explicit expression of the function $a(s)$ can be obtained by integration of the first order ODE
\begin{equation*}\label{eq-ODE-a-rho}
c\,\cos s = \frac{a'(s)}{\sqrt{1+a'^2(s)}}\,.
\end{equation*}
We obtain, up to a vertical translation, the following solutions: 
\begin{itemize}
\item[(i)] Case $0<c<1$: 
\[
a(s)=\sinh ^{-1}\left(\frac{c \, \sin s}{\sqrt{1-c^2}}\right)\,, \quad s \in \left (- \frac{\pi}{2}, \frac{\pi}{2} \right )\,.
\]
\item[(ii)] Case $c=1$:
\[
a(s)=\log (\sin s)\,, \quad 0<s < \frac{\pi}{2}\,.
\]
\end{itemize}
}
{Since the biconservativity condition is a smooth tensorial equation any solution of the type (i) extends smoothly across the two poles and produces a \textit{compact} proper biconservative hypersurface which actually is a graph $\phi: \s^4 \hookrightarrow \s^4 \times \R$.}
{More explicitly, let $x=\left ( x_1,\ldots, x_5\right )\in \s^4 \subset \R^5$. Then we have:}
\[
\phi(x)=\left (x,\sinh ^{-1}\left(\frac{c\, x_5}{\sqrt{1-c^2}}\right) \right)\,.
\]
{Similarly to the case (i), the solution (ii) extends smoothly across the North pole and produces a totally umbilical proper biconservative hypersurface which actually is the graph $\phi: \s^4_+ \hookrightarrow \s^4_+ \times \R \subset \s^4 \times \R$ defined by
\[
\phi(x)=\left (x,\log x_5 \right)\,.
\]
We show that this graph is complete. To this purpose, we first observe that the induced metric $\tilde{g}$ on $\s^4_+$ can be described as follows:
\[
\s^4_+=\left ( \s^3 \times (0,\pi/2], \tilde{g}\right )\,,
\]
where
\[
\tilde{g}=\cos^2 s \, g_{\s^3}+ \frac{1}{\sin^2 s}\,ds^2\,.
\]
Now, all the meridians emanating from the North pole are geodesics of infinite length. Therefore, the completeness of $\tilde{g}$ follows readily from the Hopf-Rinow theorem.
} 
\vspace{3mm}

\subsection{Non-totally umbilical solutions: the proofs of Theorems~\ref{Th-non-umbil-bic} and \ref{thm:rigidity}}\label{Sub-sec-bic-non-umb} 
{
\begin{proof}[Proof of Theorem~\ref{Th-non-umbil-bic}] We construct explicitly the required hypersurfaces. We use the upper half-space model
\[
\h^m=\{(x,y):x\in\R^{m-1},\ y>0\}\,,
\qquad
 g_{\h^m}=\frac{dx^2+dy^2}{y^2}\,.
\]
We take the parallel horospheres
\[
f_s(x)=(x,e^s)
\]
which satisfy
\[
\kappa_1(s)=\cdots=\kappa_{m-1}(s)=1\,,
\qquad H_s=1
\]
and consider a Tojeiro hypersurface
\[
\phi(x,s)=\bigl(f_s(x),a(s)\bigr)\,.
\]
According to \eqref{eq-curv-prin}, the principal curvatures of $\phi$ are
\begin{equation}\label{eq:k1kminrho}
k_1^\phi=\cdots=k_{m-1}^\phi=-\rho\,,
\qquad k_m^\phi=\rho'
\end{equation}
and hence its mean curvature is
\begin{equation*}
 H=\frac{1}{m}\big ( \rho'-(m-1)\rho \big )\,.
 \label{eq:mean}
\end{equation*}
In this setting, the biconservative equation reduces to
\begin{equation}
 H'\rho'+\frac m2 HH'-(m-1)H\rho=0\,.
 \label{eq:BC}
\end{equation}
We observe that, according to \eqref{eq:k1kminrho},  if $ \rho(s)=e^{- s}$ the hypersurface is totally umbilical. Therefore, it is natural to seek non-totally umbilical solutions of the type
\begin{equation*}
 \rho(s)=e^{\beta s},\qquad \beta\in \R,\;  \beta\neq -1\,,
 \label{eq:rhoexp}
\end{equation*}
which are defined for all $s$ such that $e^{\beta s}<1$.
Then
\begin{equation}\label{eq:r1HH1}
\rho'=\beta\rho\,,
\qquad
H=\frac{\beta-(m-1)}{m}\rho\,,
\qquad
H'=\beta H\,.
\end{equation}
If $\beta=m-1$ the hypersurface is minimal. Since
\[
a'(s)=\frac{\rho(s)}{\sqrt{1-\rho(s)^2}}\,,
\]
we obtain, up to a vertical translation,
\begin{equation*}
 a(s)=\frac1\beta\arcsin\bigl(e^{\beta s}\bigr)\,.
 \label{eq:a}
\end{equation*}
Substituting \eqref{eq:r1HH1} into \eqref{eq:BC} tells us that the biconservativity condition becomes
\begin{equation*}
 3\beta^2-(m-1)\beta-2(m-1)=0
 \label{eq:betaquad}
\end{equation*}
whose solutions are the $\beta_{m\pm}$ defined by
\begin{equation*} \label{eq:betapm}
 \beta_{m\pm}=
 \frac{(m-1)\pm\sqrt{(m-1)(m+23)}}{6}\,.
\end{equation*}
We define
\begin{equation*}\label{eq:beta-choice-intro}
\beta_m=
\begin{cases}
\beta_{2-}=-\dfrac23,&m=2,\\[2mm]
\beta_{m+},&m\ge3.
\end{cases}
\end{equation*}
Now, we set
\begin{equation*}
\phi_m(x,s)=
 \left(
 x,e^s,
 \frac{1}{\beta_m}\arcsin(e^{\beta_m s})
 \right)\,.
 \label{eq:localimm}
\end{equation*}
We conclude that these hypersurfaces are proper biconservative. Moreover, by construction, it is clear that
$\phi_{m}$ has two distinct principal curvatures $k_i^{\phi} $, $i=1,2$, with multiplicities  $(m-1)$ and $1$ respectively. Moreover, according to \eqref{eq:k1kminrho}
\begin{equation*}\label{eq:prop-tra-curv-prin}
k_2^{ \phi}=-\beta_{m}\,k_1^{ \phi}\,.
\end{equation*}

Now, let us focus on completeness.  The induced metric is
\begin{equation*}
 g_{M}=e^{-2s}g_{\R^{m-1}}
 +\frac{ds^2}{1-e^{2\beta_m s}}\,.
 \label{eq:localmetric}
\end{equation*}
The endpoint where $\rho\to1$ lies at finite intrinsic distance. Hence the
$s$-graph itself is not complete. This endpoint is, however, only a vertical
tangent point and so we can extend the hypersurface by reflection. Indeed, let
\[
\lambda=|\beta_{m}|\,.
\]
After possibly reversing the vertical $\R$-coordinate, we introduce a new
parameter $t$ by
\begin{equation*}
 \rho(s)=\sin(\lambda t)\,,
 \qquad
 0<t<\frac{\pi}{\lambda}\,.
 \label{eq:rhot}
\end{equation*}
Since $\rho(s)=e^{\beta_m s}$, we have
\begin{equation}
 s(t)=\frac{1}{\beta_m}
 \log\left({\sin(\lambda t)}\right)\,.
 \label{eq:st}
\end{equation}
Therefore the local graph extends to
\begin{equation*}
 \phi_m(x,t)=
 \left(
 x,
 \sin(\lambda t)^{1/\beta_m}\,,
 t
 \right)\,,
 \qquad
 x\in\R^{m-1},\quad 0<t<\frac{\pi}{\lambda}\,.
 \label{eq:globalimm}
\end{equation*}
The last component is $t$, so the immersion is injective. Moreover, we observe that
\begin{equation*}
 H(t)=\frac{\beta_m-(m-1)}{m}\sin(\lambda t)
 \label{eq:Hglobal}
\end{equation*}
and so property (ii) of the statement of Theorem~\ref{Th-non-umbil-bic} holds. Next, differentiating \eqref{eq:st}, we have
\[
ds=\frac{\lambda}{\beta_m}\cot(\lambda t)dt\,.
\]
Thus
\begin{equation}
 g_{ M}
 =\sin(\lambda t)^{-2/\beta_m}
 g_{\R^{m-1}}
 +\frac{dt^2}{\sin^2(\lambda t)}\,.
 \label{eq:tmetric}
\end{equation}
Next, we introduce the global coordinate
\begin{equation*}
 r=\frac1\lambda\log\tan\frac{\lambda t}{2}\,.
 \label{eq:rcoord}
\end{equation*}
Then $r\in\R$ and
\[
dr=\frac{dt}{\sin(\lambda t)},
\qquad
\sin(\lambda t)=\sech(\lambda r)\,.
\]
Consequently \eqref{eq:tmetric} becomes
\begin{equation}
 g_{ M}
 =f^2(r)\,g_{\R^{m-1}}+dr^2
 \label{eq:rmetric}
\end{equation}
where
\[
f(r)=\cosh(\lambda r)^{1/\beta_m}\,.
\]
Now, since $f(r)$ is a smooth, positive function on the whole $\R$, we easily conclude that these hypersurfaces obtained by reflection of Tojeiro's graphs are complete. 

In summary, to end the proof of Theorem~\ref{Th-non-umbil-bic} it only remains to show that the Ricci curvature of ${M}^m$ is bounded from below and $\Scal=-12$ for $\phi_4$. Indeed, let
\[
E_i= \frac{1}{f}\,\frac{\partial}{\partial x_i}\,, \, i=1,\dots,m-1 \quad {\rm and}\quad E_m=\frac{\partial}{\partial r}\,.
\]
Then $\left \{ E_1, \dots,E_m\right \}$ is a global orthonormal frame field on ${M}^m$ with respect to the induced metric $ g_{ M}$. 
Since $ g_{ M}$ is a warped product metric with flat fiber of dimension $m-1$, one has (see, for example, \cite[p. 209]{MR719023})
\[
\Ric(\partial_r,\partial_r)=-(m-1)\frac{f''}{f}\,,
\]
and, if $1 \le i \le m-1$,
\[
\Ric(E_i,E_i)
=-\frac{f''}{f}
 -(m-2)\left(\frac{f'}f\right)^2\,.
\]
Moreover, all mixed components vanish in this adapted orthonormal frame.
Therefore, using the explicit expression of $f$ in \eqref{eq:rmetric}, we obtain:
\begin{equation*}
 \Ric(\partial_r,\partial_r)
 =-(m-1)\left[1+(\beta_m-1)\sech^2(\lambda r)\right]
 \label{eq:Ricrad}
\end{equation*}
and, for $i=1,\dots,m-1$,
\begin{equation*}
 \Ric(E_i,E_i)
 =-(m-1)+(m-1-\beta_m)\sech^2(\lambda r)\,.
 \label{eq:Rictan}
\end{equation*}
It follows readily that the Ricci curvature of ${M}^m$ is bounded from below. Finally, taking the trace of the Ricci tensor we obtain
\begin{equation*}
 \Scal
 =-m(m-1)
 +(m-1)(m-2\beta_m)\sech^2(\lambda r)\,.
 \label{eq:scalar}
\end{equation*}
Therefore the scalar curvature is constant precisely when
\[
m-2\beta_m=0\,.
\]
More specifically, this occurs only when $m=4$ and $\beta_{4}=2$. In this case we have $\Scal=-12$ and so the proof of Theorem~\ref{Th-non-umbil-bic} is complete.

 \end{proof}
}

\begin{proof}[Proof of Theorem~\ref{thm:rigidity}] 
First, we recall that, with respect to a local orthonormal tangent frame field $\{E_1,\ldots,E_4\}$ on $M$ the Gauss equation gives, for $i\neq j$,
\[
\langle R(E_i,E_j)E_j,E_i\rangle
=
\langle \overline R(E_i,E_j)E_j,E_i\rangle
+
\langle AE_i,E_i\rangle\langle AE_j,E_j\rangle
-
\langle AE_i,E_j\rangle^2.
\]
Summing over all $1\leq i<j\leq 4$, we obtain
\[
\Scal_M
=
2\sum_{i<j}
\langle \overline R(E_i,E_j)E_j,E_i\rangle
+(\operatorname{tr}A)^2-|A|^2.
\]
Using the expression of the curvature operator in the product $\Hh^4\times\R$:
\begin{align*}\label{eq:tildeR}
\bar R(\bar X,\bar Y)\bar Z=-\{\langle \bar Y,\bar Z\rangle \bar X-\langle \bar X,\bar Z\rangle \bar Y -\langle
\bar Y,\partial_t\rangle\langle \bar Z,\partial_t\rangle \bar X
+\langle \bar X,\partial_t\rangle\langle \bar Z,\partial_t\rangle \bar Y\nonumber\\
+\langle \bar X,\bar Z\rangle\langle \bar Y,\partial_t\rangle \partial_t -\langle
\bar Y,\bar Z\rangle\langle \bar X,\partial_t\rangle \partial_t\},
\end{align*}
where $\bar X, \bar Y, \bar Z$ are vector fields on ${\h}^4\times\mathbb{R}$,
we have
\[
2\sum_{i<j}
\langle \overline R(E_i,E_j)E_j,E_i\rangle= -6\left (1+\cos^2 \alpha \right ),
\]
hence 
\begin{equation}\label{eq:scal-general}
 \Scal_M
 =
 -6(1+\cos^2\alpha)+(\operatorname{tr}A)^2-|A|^2.
\end{equation}
Next, we proceed to the proof.
For simplicity, we  call $u=\cos\alpha$ and $v=|T|=\sin\alpha$. 
We recall that for hypersurfaces in $\Hh^4\times\R$ the biconservative condition \eqref{BHP1} becomes
\begin{equation}\label{BC}
 2A(\grad H)+4H\grad H-6HuT=0.
\end{equation}
{We first exclude the totally umbilical case. Suppose $A=H\,\mathrm{Id}$. Since $H\ne0$, equation \eqref{BC} gives
\[
\grad H=uT.
\]
On the other hand, \eqref{eq:XTheta} gives
\[
\grad u=-HT.
\]
Therefore
\[
\grad(H^2+u^2)=0.
\]
For a totally umbilical hypersurface, \eqref{eq:scal-general} becomes
\[
\Scal_M=-6(1+u^2)+12H^2.
\]
Since both $\Scal_M$ and $H^2+u^2$ are constant, $H^2$ is constant. Because $H\ne0$, this forces $H$ to be constant on each connected neighborhood, contradicting $\grad H\ne0$. Hence the hypersurface has exactly two distinct principal curvatures.}
Since $\grad H$ is principal, there is a function $\kappa$ such that
\[
 A(\grad H)=\kappa\,\grad H\,.
\]
Then \eqref{BC} becomes
\[
 (2\kappa+4H)\grad H=6HuT\,.
\]
By hypothesis $H\neq0$, and $u\neq0$. Therefore, either $T$ is identically zero and the hypersurface is a totally geodesic slice or, in a neighborhood $U$ of $M$, $T\neq 0$ and it is a principal direction. Since we are assuming that the hypersurface is non-minimal, we are left with $T\neq 0$ being a principal direction on $U$ open subset of $M$.
We put
\[
 E_4=\frac{T}{v},\qquad v=|T|>0\,.
\]
Then
\[
 AE_4=\mu E_4
\]
where $\mu$ is the principal curvature in the direction of $E_4$.
Since there are exactly two distinct principal curvatures, the
multiplicity $r$ of $\mu$ is one of
\[
 r=1,2,3.
\]
Let $\lambda\neq\mu$ denote the other principal curvature and let $ q=4-r$
be its multiplicity. Thus
\[
 \operatorname{spec}(A)
 =
 \{\underbrace{\mu,\ldots,\mu}_{r},
   \underbrace{\lambda,\ldots,\lambda}_{q}\}.
\]
Now, we set
\begin{equation}\label{xydef}
 x=\frac{\lambda}{v},
 \qquad
 y=\frac{\mu}{v}.
\end{equation}
Taking $X=E_4$ in \eqref{eq:XTheta} gives
\[
 E_4(u)=-\mu v
\]
and from $u^2+v^2=1$ we obtain
\begin{equation}\label{vprime}
 E_4(v)=u\mu=uyv.
\end{equation}
Next, we choose a unit
$\lambda$--principal vector $X$ orthogonal to $E_4$. Since
$X\perp T$, then \eqref{eq:nablaXT} gives
\[
 v\nabla_XE_4=u\lambda X.
\]
Putting $Y=E_4$ in the Codazzi equation
\[
 (\nabla_XA)Y-(\nabla_YA)X
 =
 -u\bigl(\langle Y,T\rangle X-\langle X,T\rangle Y\bigr)
\]
 and taking the scalar product with $X$ yields
\begin{equation}\label{eq:E4lambda}
 E_4(\lambda)
 =
 \frac{u\lambda}{v}(\mu-\lambda)+uv.
\end{equation}
Using $\lambda=xv$, $\mu=yv$ and \eqref{vprime} in \eqref{eq:E4lambda}, we obtain
\begin{equation}\label{xprime}
{E_4(x)=u(1-x^2).}
\end{equation}
For later use, introduce
\begin{equation}\label{zdef}
 z=\frac{E_4(y)}{u}.
\end{equation}
Since
\[
 \operatorname{tr}A=r\mu+q\lambda,
 \qquad
 |A|^2=r\mu^2+q\lambda^2,
\]
and $u^2=1-v^2$, using \eqref{eq:scal-general} we obtain
\begin{equation*}\label{scalar}
 \Scal_M=-12+v^2{\mathcal F}_r(x,y),
\end{equation*}
where
\begin{equation}\label{Fr}
 {\mathcal F}_r(x,y)
 =
 6+r(r-1)y^2+q(q-1)x^2+2rqxy.
\end{equation}
Assume now that $\Scal_M=S_0$ is constant. Differentiating
\[
 v^2{\mathcal F}_r(x,y)=S_0+12
\]
along $E_4$ and using
\eqref{vprime}, \eqref{xprime}, and \eqref{zdef}, we obtain, after dividing by the nonzero factor $uv^2$,
\begin{equation}\label{Ar}
 {\mathcal A}_r(x,y,z):=
 2y{\mathcal F}_r+(1-x^2)({\mathcal F}_r)_x+z({\mathcal F}_r)_y=0.
\end{equation}
The mean curvature can be written as
\[
 H=vh,
 \qquad
 h=h_r(x,y):=\frac{ry+qx}{4}.
\]
Using again \eqref{vprime}, \eqref{xprime}, and \eqref{zdef},
\begin{equation}\label{E4H}
 E_4(H)
 =
 uv\left(
 yh+\frac q4(1-x^2)+\frac r4z
 \right).
\end{equation}
Because $\grad H$ is parallel to $E_4$, the biconservative condition \eqref{BC} reduces to
\[
 (\mu+2H)E_4(H)-3Huv=0
\]
that, together with \eqref{E4H},  up to a nonzero factor $uv^2$, gives
\begin{equation}\label{Br}
 {\mathcal B}_r(x,y,z):=
 (y+2h)
 \left(
 yh+\frac q4(1-x^2)+\frac r4z
 \right)-3h=0.
\end{equation}
Thus, for each $r=1,2,3$, the functions $x,y,z$ satisfy the two
algebraic relations
\[
 {\mathcal  A}_r(x,y,z)=0,
 \qquad
 {\mathcal B}_r(x,y,z)=0.
\]
Eliminating $z$ from \eqref{Ar} and \eqref{Br} gives a polynomial
relation
\[
 {\mathcal P}_r(x,y)=0.
\]
Up to multiplication by a nonzero constant, these polynomials are
\begin{align}
 {\mathcal P}_1(x,y)
 &=
 x^4-x^3y-x^2y^2+5x^2+xy^3+4xy+3y^2,
\notag\\
 {\mathcal P}_2(x,y)
 &=
 x^4-3x^2y^2+5x^2+2xy^3+11xy+11y^2,
\notag\\
 {\mathcal P}_3(x,y)
 &=
 x^4+3x^3y-9x^2y^2+5x^2+5xy^3+32xy+71y^2.
\notag
\end{align}
Since ${\mathcal P}_r(x,y)$ is identically zero on $U$, differentiating along $E_4$
and dividing by $u\neq0$ gives
\begin{equation*}\label{Dr}
 {\mathcal D}_r(x,y,z):=
 (1-x^2)({\mathcal P}_r)_x+z({\mathcal P}_r)_y=0.
\end{equation*}
Eliminating $z$ from ${\mathcal A}_r(x,y,z)=0$ and ${\mathcal D}_r(x,y,z)=0$ gives a polynomial condition ${\mathcal G}_r(x,y)=0$. Then eliminating $y$
 from ${\mathcal P}_r(x,y)=0$ and ${\mathcal G}_r(x,y)=0$ produces a polynomial condition in $x$:
\[
 {\mathcal R}_r(x)=0.
\]
A direct expansion gives
\begin{align}
 {\mathcal R}_1(x)
 &=
 x^6(x^2-1)
 \bigl(150x^4-156x^2+55\bigr),
\notag\\
 {\mathcal R}_2(x)
 &=
 x^5(x^2-1)
 \bigl(
 26244x^{12}+15876x^{10}-257067x^8+94842x^6
 \notag\\[-1mm]
 &\hspace{35mm}
 +731421x^4-957352x^2+346060
 \bigr),
\notag\\
 {\mathcal R}_3(x)
 &=
 x^5(x^2-2)
 \Bigl(
 762048x^{12}-997488x^{10}-13970736x^8+28751214x^6
 \notag\\[-1mm]
 &\hspace{24mm}
 +55828234x^4
 -189031820x^2+131696125
 \Bigr).\notag
\end{align}
Now, the only fact needed is that each ${\mathcal R}_r$
is a nonzero polynomial. Therefore if $U$ is connected then $x$ is constant
on $U$. Equation \eqref{xprime}, together with $u\neq0$, then forces
\begin{equation*}\label{xpm1}
{x^2=1.}
\end{equation*}
This is the main rigidity step.
We now exclude  the multiplicities $r=2$ and $r=3$. Since $x^2=1$, a direct check shows that $x=\pm 1$ is not a solution of ${\mathcal R}_3(x)=0$, thus the value $r=3$ is not allowed. If $r=2$, then $x=\pm 1$ is a solution of ${\mathcal R}_2(x)=0$. 
But since $x$ is constant, the polynomial relation ${\mathcal P}_r(x,y)=0$ shows
that $y$ takes values in a finite set; hence $y$ is also constant on
the connected set $U$. Therefore from \eqref{zdef}
\[
 z=0.
\]
We now substitute $x=\pm1$ and $z=0$ into the original polynomials
${\mathcal A}_r(x,y,z)$ and ${\mathcal B}_r(x,y,z)$.
For $r=2$ we obtain, when $x=1$,
\[
 {\mathcal A}_2=4y(y+2)^2,
 \qquad
 {\mathcal B}_2=\frac12(y-1)(y+1)(2y+3),
\]
and, when $x=-1$,
\[
{\mathcal  A}_2=4y(y-2)^2,
 \qquad
 {\mathcal B}_2=\frac12(y-1)(y+1)(2y-3).
\]
In either case the two polynomials have no common real zero. Thus
also $r=2$ is not a possible value.
We conclude that
\begin{equation*}\label{r1}
 {r=1,\qquad q=3.}
\end{equation*}
Thus the principal curvature in the direction of $T$ is necessarily
simple.
We now determine the two principal curvatures. Since $x=\pm1$ is constant, the relation ${\mathcal P}_1(x,y)=0$ again implies that $y$ takes values in a finite set. Thus $y$ is constant on $U$, and \eqref{zdef} gives $z=0$. Substitution into ${\mathcal  A}_1={\mathcal B}_1=0$ gives
\[
(x,y)=(1,-2) \quad {\rm or} \quad (x,y)=(-1,2)\,.
\]
The two possibilities are exchanged by reversing the unit normal.
We therefore choose the orientation for which
\begin{equation*}\label{xyfinal}
 x=-1,\qquad y=2.
\end{equation*}
From \eqref{xydef}, we obtain
\[
 {
 \lambda=-v\quad\text{(multiplicity $3$)},\qquad
 \mu=2v\quad\text{(multiplicity $1$)}.
 }
\]
Substitution into \eqref{Fr} gives
\[
 {\mathcal F}_1(-1,2)
 =
 6\bigl(1+(-1)^2+(-1)\cdot2\bigr)=0.
\]
Hence
\begin{equation*}\label{scalminus12}
{\Scal_M=-12.}
\end{equation*}
Finally, since $T$ is principal and $u=\cos\alpha\neq0$, the hypersurface is
locally of Tojeiro type:
\[
 \phi(x,s)=\bigl(f_s(x),a(s)\bigr),
\]
where $f_s$ is a parallel family in $\h^4$. Put
\[
 \rho(s)=\frac{a'(s)}{\sqrt{1+a'(s)^2}},
\]
and recall that 
\begin{equation*}\label{vrho}
 {v=|T|=\rho.}
\end{equation*}
The principal curvatures of a Tojeiro hypersurface are
\[
 -\rho\,\kappa_1(s),\quad
 -\rho\,\kappa_2(s),\quad
 -\rho\,\kappa_3(s),\quad
 \rho'(s),
\]
where $\kappa_i(s)$ are the principal curvatures of
$f_s\subset\h^4$.
Since the $T$--principal curvature is simple and the other three
principal curvatures are all equal to $\lambda=-v=-\rho$, we have
\[
 -\rho\kappa_i=-\rho,\qquad i=1,2,3.
\]
As $\rho>0$,
\[
 \kappa_1=\kappa_2=\kappa_3=1.
\]
Therefore every $f_s$ is a totally umbilical hypersurface of
$\Hh^4$ with principal curvature $1$, hence a horosphere.
Moreover,
\[
 \rho'=\mu=2v=2\rho.
\]
Thus
\begin{equation*}\label{rhoexp}
{\rho(s)=ce^{2s}}
\end{equation*}
for some $c>0$. The parameter $c>0$ does not produce geometrically distinct hypersurfaces. Indeed, after introducing the translated parameter
\[
\widetilde{s}=s+\frac{\log c}{2},
\]
one has
\[
\rho(\widetilde{s})=e^{2\widetilde{s}}.
\]
Thus the dependence on $c$ can be absorbed by a translation of the parameter $s$ along the parallel family of horospheres.
\end{proof}

\begin{proof}[Proof of Theorem~\ref{Th-Max-Principle-application}] First, we state the classical Omori-Yau maximum principle in a form which is convenient for our purposes:
\begin{theorem}[\cite{MR215259,MR431040}, see also \cite{MR3445380}]\label{Th-Max-Princ} 

Assume that $M$ is a complete Riemannian manifold with Ricci curvature bounded from below. Let $u:M \to \R $ be a ${C}^2$ function. If $u$ is bounded from above,
then there exists a sequence of points $\{ p_k\}_{k \in \n} \subset M$ such that
\[
\Delta u(p_k) > -\,\frac{1}{k}\,, \quad \lim_{k \to \infty}|\grad u(p_k)|=0 \quad {\rm and}\quad \lim_{k \to \infty}u(p_k)= \sup_M u \,.
\]
\end{theorem}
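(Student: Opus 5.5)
Since the theorem is quoted from the literature, a sketch is enough. The plan is the standard penalization argument with a proper function whose gradient and Laplacian are controlled by Laplacian comparison. Keep the paper's sign convention in mind: writing $\Delta_0=-\Delta=\Div\grad$ for the analyst's Laplacian, the claim $\Delta u(p_k)>-1/k$ means $\Delta_0 u(p_k)<1/k$.

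\emph{Step 1: the penalization function.} Fix $o\in M$ and let $r=d(o,\cdot)$. Assume $\Ric\ge-(m-1)K$ with $K>0$. The Laplacian comparison theorem gives
\[
\Delta_0 r\le (m-1)\sqrt K\coth(\sqrt K r)
\]
away from $o$ and the cut locus of $o$, and in the barrier sense everywhere else. Set $\varphi=\log(1+r^2)$. Then $\varphi\to+\infty$ at infinity, because closed balls are compact by completeness and Hopf--Rinow. Moreover $|\grad\varphi|\le 2r/(1+r^2)\le 1$. From the comparison bound, $\Delta_0 r^2\le C(1+r)$, and hence
\[
\Delta_0\varphi\le \frac{\Delta_0 r^2}{1+r^2}\le C_0,
\]
with $C_0$ depending only on $m$ and $K$.

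\emph{Step 2: maximizing the perturbed function.} Let $s=\sup_M u<\infty$. Choose $x_k$ with $u(x_k)>s-1/(2k)$, and then $\varepsilon_k>0$ so small that $\varepsilon_k\varphi(x_k)<1/(2k)$, $\varepsilon_k<1/k$ and $C_0\varepsilon_k<1/k$. The function $u-\varepsilon_k\varphi$ tends to $-\infty$ at infinity, so it attains a maximum at some point $p_k$. At $p_k$ we have $u(p_k)\ge u(x_k)-\varepsilon_k\varphi(x_k)>s-1/k$, so $u(p_k)\to s$. If $\varphi$ is smooth at $p_k$, first- and second-order conditions give $\grad u=\varepsilon_k\grad\varphi$, so $|\grad u(p_k)|\le\varepsilon_k\to0$. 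They also give $\Delta_0 u(p_k)\le\varepsilon_k\Delta_0\varphi(p_k)\le C_0\varepsilon_k<1/k$, which is exactly $\Delta u(p_k)>-1/k$ in the paper's convention.

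\emph{Main obstacle: the cut locus (Calabi's trick).} The difficulty is that $p_k$ may lie in the cut locus of $o$, or at $o$ itself, where $r$ is not smooth. I would handle this with Calabi's trick. Take a minimizing geodesic $\gamma$ from $o$ to $p_k$ and a small $\delta>0$, and replace $r$ by the upper barrier $r_\delta=\delta+d(\gamma(\delta),\cdot)$. This function is smooth near $p_k$, satisfies $r_\delta\ge r$ with equality at $p_k$, and obeys the same comparison bound up to an error that vanishes as $\delta\to0$. Then $u-\varepsilon_k\log(1+r_\delta^2)$ still has a local maximum at $p_k$, and the smooth computation above applies with constants $C_0+o(1)$; letting $\delta\to0$ yields the three claimed properties. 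At $o$ itself, $r^2$ is smooth and the estimate is immediate.
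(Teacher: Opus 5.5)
Your sketch is correct. Maximizing $u-\varepsilon_k\log(1+r^2)$ and bounding $|\grad\varphi|\le 1$ and $\Delta_0\varphi\le C_0$ via Laplacian comparison is the classical Yau-type argument. So is handling the cut locus with Calabi's upper barrier $\delta+d(\gamma(\delta),\cdot)$, which is smooth at $p_k$ and touches $r$ there. You also translated the paper's sign convention $\Delta=-\Div\grad$ correctly.

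The paper gives no proof of its own; it simply quotes the result from Omori, Yau and Al\'{\i}as--Mastrolia--Rigoli. The last reference derives it from a more general function-theoretic criterion, and that criterion is what gives the weaker hypothesis $\Ric\ge-(m-1)G^2(r)$ mentioned in the paper's subsequent remark.
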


\vspace{3mm}
The strategy of the proof is to apply the Omori-Yau maximum principle to the normal component of the bitension field.

First, using \eqref{BHP2}, we find:
\begin{eqnarray}\label{eq-DeltaH2<=0}\nonumber
\frac{1}{2}\Delta H^2&=&  H \Delta H -  |\grad H|^2 \\
&=&- [|A|^2 + (m-1)  \sin^2 \alpha \big ] H^2 -  |\grad H|^2 \leq 0 \,.
\end{eqnarray}
By assumption, $H^2$ is bounded from above. If ${\rm Ric}(M)$ is bounded from below, we can apply the maximum principle stated in Theorem~\ref{Th-Max-Princ} to the function $u=H^2$ and we deduce that there exists a sequence of points $\{ p_k\}_{k \in \n} \subset M^m$ such that
\[
\Delta H^2(p_k) > -\,\frac{1}{k} \quad {\rm and}\quad \lim_{k \to \infty}H^2(p_k)= \sup_M H^2 \,.
\]
Therefore, using \eqref{eq-DeltaH2<=0}, we deduce that 
\[
\lim_{k \to \infty}\Delta H^2(p_k) =0
\]
and so
\begin{equation}\label{eq:limite-utile}
\lim_{k \to \infty}- \left (\big [|A|^2 + (m-1)   \sin^2 \alpha \big ] H^2 \right )(p_k) =0\,.
\end{equation}
Next, we argue by contradiction and suppose that the hypersurface is not minimal. Thus
\begin{equation*}\label{eq-sup>0}
\sup_M H^2 >0\,.
\end{equation*}
Then, from \eqref{eq:limite-utile}, we infer
\begin{equation}\label{eq-max-princ-hyper}
\lim_{k \to \infty} |A|^2(p_k) =0\,.
\end{equation}

Next, we recall the general fact
\[
(m-1) H^2 \leq |A|^2 \,.
\]
This inequality, together with \eqref{eq-max-princ-hyper}, imply
\begin{equation*}\label{eq-1-limite}
\lim_{k \to \infty} H^2(p_k)=0\,,
\end{equation*}
a contradiction, as required to end the proof.
\end{proof} 
\begin{remark}
The assumption on the Ricci curvature in Theorem~\ref{Th-Max-Princ} can be relaxed. Indeed, the validity of the Omori-Yau maximum principle can be ensured by either some function theoretic assumptions or by assuming a weaker lower bound on the Ricci curvature. We refer to \cite{MR3445380, MR2116555} for a thorough discussion on this topic. More precisely (see \cite[Theorem~2.5]{MR3445380}), in Theorem~\ref{Th-Max-Princ} and Theorem~\ref{Th-Max-Principle-application}, it suffices to assume
\begin{equation*}\label{eq-Ricci-bound}
{\rm Ric}(M)_x \geq - (m-1) G^2(r(x))\langle\,, \rangle_M \,,
\end{equation*}
where $r(x)$ denotes the distance function from a fixed point and $G$ is a $C^1$ function on $[0,+\infty)$ such that
\[
{\rm (i)} \, G(0)>0\, \quad {\rm (ii)}\, G'(t) \geq 0\, \quad {\rm (iii)}\, \frac{1}{G(t)} \not \in {\rm L}^1 ([0,+\infty))\,.
\]
\end{remark}
\begin{remark} The method of proof of Theorem~\ref{Th-Max-Principle-application} does not naturally apply to the case of complete biharmonic hypersurfaces in $\s^m \times \R$. Nevertheless, we can prove a related result which requires different, somehow stronger assumptions. More specifically, we have:
{
\begin{proposition}\label{Prop-max-princ-sphere}
Let $\phi:M^{m}\hookrightarrow \s^m \times \R$ be a complete biharmonic hypersurface. Assume that
\[
 |A|^2 \geq m-1 
 \]
 and $H^2$ is of class $L^p$ for some $p>1$.
Then  $\phi:M^{m}\hookrightarrow \s^m \times \R$ is minimal.\end{proposition}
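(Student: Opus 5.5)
Our plan is to use the normal component \eqref{BHP2} with $\varepsilon=1$, compute $\Delta H^2$ as in the proof of Theorem~\ref{Th-Max-Principle-application}, and replace the Omori--Yau argument with an $L^p$ Liouville-type argument for subharmonic functions. With the sign convention $\Delta=-\Div\grad$, biharmonicity gives $\Delta H=-H|A|^2+(m-1)H\sin^2\alpha$. Hence
\[
\frac12\Delta H^2=H\Delta H-|\grad H|^2=-\bigl(|A|^2-(m-1)\sin^2\alpha\bigr)H^2-|\grad H|^2 .
\]
Since $\sin^2\alpha\le 1$, the hypothesis $|A|^2\ge m-1$ gives $|A|^2-(m-1)\sin^2\alpha\ge 0$, so $\Delta H^2\le 0$. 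In the usual convention this says that $u=H^2\ge 0$ is subharmonic on $M$.

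Next we use Yau's theorem: on a complete Riemannian manifold, a nonnegative subharmonic function in $L^p$ with $p>1$ is constant. Since $H^2\in L^p$, we get $H^2\equiv c$. If we want to avoid quoting Yau, we can prove this directly with a cutoff $\psi$ supported in $B_{2R}$, equal to $1$ on $B_R$, with $|\grad\psi|\le 2/R$. Multiply $-\Delta u\ge 0$ (in the analyst's sign) by $\psi^2u^{p-1}$ and integrate by parts. Using Young's inequality, this gives
\[
\int_{B_R}u^{p-2}|\grad u|^2\le \frac{C}{R^2}\int_{B_{2R}}u^p .
\]
Letting $R\to\infty$ yields $\grad u=0$. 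The main technical care is the case $1<p<2$, where $u^{p-2}$ is singular at zeros of $u$; we handle it by using $(u+\delta)^{p-1}$ and letting $\delta\to0$.

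Once $H^2\equiv c$, the displayed identity forces $\bigl(|A|^2-(m-1)\sin^2\alpha\bigr)c=0$ everywhere. Now split into cases.

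\emph{Case $M$ has infinite volume.} Then $H^2\equiv c\in L^p$ forces $c=0$, so $M$ is minimal.

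\emph{Case $M$ has finite volume and $c>0$.} Then $|A|^2=(m-1)\sin^2\alpha\le m-1$, so $|A|^2=m-1$ and $\sin^2\alpha\equiv1$, i.e.\ $\cos\alpha\equiv0$ and $M$ is a cylinder $M^{m-1}\times\R$. A cylinder has infinite volume, a contradiction, so $c=0$.

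We expect the main obstacle to be the Liouville step, in particular the integration by parts without curvature assumptions; this is exactly why the $L^p$ hypothesis with $p>1$ is imposed. The cylinder/volume argument at the end is short but must be stated carefully.
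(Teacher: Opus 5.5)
Your proof is correct and follows the paper's argument: you use the same identity for $\frac12\Delta H^2$ under $|A|^2\ge m-1$, apply Yau's $L^p$ Liouville theorem for nonnegative subharmonic functions to make $H$ constant, and conclude that a cylinder $M^{m-1}\times\R$ with $H\neq0$ cannot have $H^2\in L^p$. The only variation is how you reach the cylinder: the paper invokes Proposition~\ref{Prop-CMC}~(ii), i.e.\ the tangential condition $\cos\alpha\,T=0$, whereas you read $\sin^2\alpha\equiv1$ and $|A|^2\equiv m-1$ off the normal equation once $H^2\equiv c>0$, so your version uses only $[\tau_2]^{\perp}=0$.
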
 
\begin{proof} Using \eqref{BHP2} and the assumption $|A|^2 \geq m-1$ we have:
\begin{eqnarray*}
\frac{1}{2}\Delta H^2&=&  H \Delta H -  |\grad H|^2 \\
&=&- \big [|A|^2 - (m-1)  \sin^2 \alpha \big ] H^2 -  |\grad H|^2 \leq 0 \,.
\end{eqnarray*}
Now, since $H^2$ is of class $L^p$ for some $p>1$, a classical result of Yau \cite[Theorem 3]{MR417452} (see also \cite{MR662919}) implies that $H$ is constant. Assume that $H\neq 0$, then,  from Proposition~\ref{Prop-CMC} (ii),  $\phi:M^{m}\hookrightarrow \s^m \times \R$ is a cylinder $M^{m-1}\times\R$, where $M^{m-1}$ is a non-minimal CMC biharmonic hypersurface in $\s^m$. Now, in the cylinder $M^{m-1}\times\R$, $H\neq 0$ contradicts that $H^2$ is of class $L^p$ for some $p>1$.
\end{proof}
}

Finally, we point out that, according to the Proposition~4.1 of \cite{MR3107694}, a {\em compact} biharmonic hypersurface in $\s^m\times\R$ is a slice and, by the same argument, we conclude that there exists no compact biharmonic hypersurface in $\h^m\times\R$.
\end{remark}

\end{document}